\def\Bbb{\mathbb}
\def\eea{\end{eqnarray*}}
\newtheorem{defn}{Definition}
\newtheorem{thm}{Theorem}[section]
\newtheorem{prop}[thm]{Proposition}
\newtheorem{cor}[thm]{Corollary}
\newtheorem{lem}[thm]{Lemma}
\newtheorem{remark}[thm]{Remark}
\newtheorem{question}[thm]{Question}
\def\bea{\begin{eqnarray*}}
\def\eea{\end{eqnarray*}}
\def\be{\begin{equation}}
\def\ee{\end{equation}}
\def\k{K\"{a}hler  }
\begin{document}
\renewcommand{\theequation}{\thesection.\arabic{equation}}

\title[Almost-K\"{a}hler Scalar Curvature Functions]{Scalar Curvature Functions of Almost-K\"{a}hler Metrics}

\author{Jongsu Kim and Chanyoung Sung}
\date{\today}

\address{Dept. of Mathematics, Sogang University, Seoul, Korea}
\email{jskim@sogang.ac.kr}

\address{Dept. of mathematics education, Korea National University of Education}
\email{cysung@knue.ac.kr}

\thanks{The first author was supported by the National Research Foundation of Korea(NRF) grant funded by the Korea government(MOE) (No.NRF-2010-0011704).
}

\keywords{almost-K\"ahler, scalar curvature, symplectic manifold}

\subjclass[2010]{53D05,53D35,53C15,53C21}

\begin{abstract}
For a closed smooth manifold $M$ admitting a symplectic structure, we define a smooth topological invariant $Z(M)$ using {\it almost-K\"{a}hler} metrics, i.e. Riemannian metrics compatible with symplectic structures. We also introduce $Z(M, [[\omega]])$ depending  on symplectic deformation equivalence class $[[\omega]]$.
 We first prove that there exists a 6-dimensional smooth manifold $M$ with more than one deformation equivalence classes with different signs of $Z(M, [[\omega]] )$. Using $Z$ invariants, we set up a Kazdan-Warner type problem of classifying symplectic manifolds into three categories.

 We finally prove that on every closed symplectic manifold $(M, \omega)$ of dimension $\geq 4$, any smooth function which is somewhere negative and somewhere zero can be the scalar curvature of an almost-K\"ahler metric compatible with a symplectic form which is deformation equivalent to $\omega$.
\end{abstract}

\maketitle

\setcounter{section}{0}
\setcounter{equation}{0}

\section{Introduction}
In Riemannian geometry, scalar curvature encodes certain information of the differential topology of a smooth closed manifold. There has been much progress on topological conditions for the existence of a metric of positive scalar curvature, and more generally  which functions on a given manifold can be the scalar curvature of a Riemannian metric.

The latter problem is known as the Kazdan-Warner problem. They proved \cite{kw3} that
the necessary and sufficient condition for a smooth function $f$ on  a closed manifold $M$ of dimension $\geq 3$ to be the scalar curvature of some metric is
\begin{itemize}
     \item $f$ is arbitrary, in case $Y(M)>0$,
     \item $f$ is identically zero or somewhere negative, in case $Y(M)=0$ and $M$ admits a scalar-flat metric,
     \item $f$ is negative somewhere, in the remaining case,
\end{itemize}
where $Y(M)$ denotes the Yamabe invariant of $M$. For the Yamabe invariant, the readers are referred to \cite{koba}.

\smallskip
As an extension of the Kazdan-Warner problem, it is natural to pursue a similar classification in some restricted class of Riemannian metrics.

Let $M$ be a smooth manifold with a symplectic form $\omega$.
An almost-complex structure $J$ is called $\omega$-compatible, if
$\omega(J\cdot,J\cdot)=\omega(\cdot,\cdot)$, and $\omega(\cdot,J\cdot)$ is positive-definite. Thus a
smooth $\omega$-compatible $J$ defines a smooth $J$-invariant Riemannian metric   $g(\cdot,\cdot):=\omega(\cdot,J\cdot)$, which is called an $\omega$-almost-K\"ahler metric. It is K\"ahler iff $J$ is integrable.

Due to the lack of a Yamabe-type theorem (\cite{LP}) which would produce metrics of constant scalar curvature, a Kazdan-Warner type result for symplectic manifolds is left open even without any conjectures. A general existence result so far is in \cite{Ki2} stating that every symplectic manifold of dimension $\geq 4$ admits a complete compatible almost-K\"ahler metric of negative scalar curvature.

In \cite{KS, kim}, we studied a symplectic version of the Kazdan-Warner problem and in particular we characterized the scalar curvature functions of some symplectic tori and nil-manifolds.
Here we shall refine this version of Kazdan-Warner problem.

\smallskip

Let us recall that
two symplectic forms $\omega_0$ and $\omega_1$ on $M$ are called {\it deformation equivalent},
 if there exists a diffeomorphism $\psi$ of $M$  such that  $\psi^* \omega_1$ and $\omega_0$ can be joined by a smooth homotopy of sympelctic forms, \cite{MS}.
 There are a number of smooth manifolds which admit more than one deformation equivalence classes: see \cite{S} or references therein.
For a symplectic form $\omega$, its  deformation equivalence class shall be denoted by $[[\omega]]$.
By abuse of notation, we say that a metric $g$ is in $[[\omega]]$ when $g$ is compatible with a symplectic form $\omega$ in $[[\omega]]$.

\begin{defn}
Let  $M$ be a smooth closed manifold of dimension $2n\geq 4$ which admits a symplectic structure.
Define $$Z(M, [[\omega]] ) = \sup_{g \in [[\omega]]}  \frac{\int_M s_g  d\textrm{vol}_g}{ (\textrm{Vol}_g)^{\frac{n-1}{n}}},$$  where $s_g$ is the scalar curvature of $g$, and  define $$Z(M) = \sup_{[[\omega]]}  Z(M, [[\omega]]).$$
\end{defn}
The denominator in $\frac{\int_M s_g  d\textrm{vol}_g}{ (\textrm{Vol}_g)^{\frac{n-1}{n}}}$ was put for the invariance under a scale change $\omega\rightarrow c\cdot\omega$ with $c>0$, and one can get the following inequality from the formulas (\ref{sminus}) and (\ref{Blair}) below;
\begin{equation} \label{zmo}
Z(M,[[\omega]])\leq\sup_{\omega\in [[\omega]]}\frac{ 4\pi c_1(\omega)
\cdot \frac{[\omega]^{n-1}}{(n-1)!}}{(\frac{[\omega]^n}{n!})^{\frac{n-1}{n}}},
\end{equation}
\noindent where $c_1 (\omega)$ is the first Chern class of $\omega$.

\smallskip
These $Z$ numbers may take the value of $\infty$ and are different in nature from the Yamabe invariant which is bounded above in each dimension. Obviously $Z(M)$ is a smooth topological invariant of $M$.
Although the quantity in the right hand side of (\ref{zmo}) may serve usefully for many purposes, the $Z$ value reflects almost-\k geometry better, so seems more relevant to our purpose.
Of course, it would be very interesting to know if the equality in (\ref{zmo}) always holds or not.

To explain why possible scalar curvature functions may depend on $[[\omega]]$, we shall demonstrate a smooth manifold which admits two symplectic deformation equivalence classes with distinct signs of $Z(M, [[\omega]])$.
\begin{thm} \label{main2}
 There exists a smooth closed 6-dimensional manifold with distinct symplectic deformation equivalence classes $[[\omega_i]]$, $i=1,2$ such that $Z(M, [[\omega_1]] ) =\infty $ and $Z(M, [[\omega_2]] ) <0$ .
\end{thm}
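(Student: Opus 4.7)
The strategy is to produce a single closed smooth 6-manifold $M$ carrying two symplectic structures with fundamentally different Chern-class behaviour and to read off the two values of $Z$ from the bound (\ref{zmo}) in one case and from a degenerating family of Kähler metrics in the other. To separate the classes, I would first note that $c_1(\omega)\in H^2(M;\mathbb{Z})$ is invariant under smooth homotopy of compatible almost complex structures (the space of such structures being contractible for each $\omega$) and transforms by $\psi^*$ under a diffeomorphism $\psi$; hence two symplectic forms are deformation inequivalent whenever their Chern classes lie in distinct $\mathrm{Diff}(M)$-orbits of $H^2(M;\mathbb{Z})$. For the concrete example I would invoke a construction as in \cite{S}: a smooth 6-manifold admitting both a Kähler structure of general type (giving $\omega_2$) and, after an appropriate Gompf-type symplectic sum with a Fano building block, a Fano-like symplectic structure (giving $\omega_1$), so that the respective Chern classes pair with the symplectic forms with opposite signs.

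For $Z(M, [[\omega_2]])<0$ I would apply (\ref{zmo}) and aim to show that
\[
\sup_{\omega\in [[\omega_2]]} R([\omega]) < 0, \qquad R([\omega]):=\frac{4\pi\, c_1(\omega)\cdot [\omega]^{n-1}/(n-1)!}{([\omega]^n/n!)^{(n-1)/n}}.
\]
Since $R$ is scale-invariant, I may restrict to the normalized slice $\{[\omega]^3=6\}$. On the general-type Kähler cone (which I would verify contains the cohomology classes realised by $[[\omega_2]]$ via rigidity of the symplectic cone in this regime) one has $c_1\cdot[\omega]^2<0$ strictly; moreover $R([\omega])\to-\infty$ as $[\omega]$ approaches the boundary of the cone within the normalized slice, because at such boundary classes $[\omega]^3$ tends to zero while $c_1\cdot[\omega]^2$ remains strictly negative. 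A standard coercivity argument on the normalized slice then yields that the supremum is attained at an interior point and is a strictly negative constant.

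For $Z(M, [[\omega_1]])=+\infty$ I would construct a family of Kähler classes $[\omega_t]\in[[\omega_1]]$ converging to a boundary class $b\in\overline{\mathcal{K}}$ satisfying $b^3=0$ and $c_1\cdot b^2>0$; such boundary classes arise naturally on Fano threefolds admitting a contraction to a lower-dimensional Fano variety, with the pullback of an ample class as the standard source. The Kähler metrics $g_t$ representing $[\omega_t]$ are in particular almost-Kähler and lie in $[[\omega_1]]$, and for them $\int s_{g_t}\,d\mathrm{vol}_{g_t}=4\pi\, c_1\cdot[\omega_t]^2$, so the scale-invariant normalized total scalar curvature coincides with $R([\omega_t])$ and diverges to $+\infty$ as $[\omega_t]\to b$.

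The principal obstacle is producing the manifold $M$ itself---a single smooth 6-manifold simultaneously supporting both a negative-Chern-class Kähler structure and a positive-Chern-class symplectic one. This is nongeneric and typically relies on either a smooth equivalence between two distinct Kähler threefolds with opposite-sign Chern classes, or a Gompf-type symplectic sum combining Fano and general-type building blocks as in \cite{S}. One must also verify that $[[\omega_2]]$ cannot be enlarged by exotic non-Kähler symplectic forms into a region where $c_1\cdot[\omega]^2\geq 0$, a cone-rigidity statement that is the second source of difficulty. Once $M$ and the cone descriptions are in hand, the scalar-curvature estimates follow routinely from (\ref{zmo}) and standard boundary analysis of the Kähler cone.
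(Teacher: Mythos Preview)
Your outline has the right large-scale shape --- use the cohomological bound (\ref{zmo}) for one class and a degenerating family of K\"ahler metrics for the other --- but it contains a genuine gap: you never produce the manifold. The two items you flag as ``principal obstacles'' (constructing $M$, and the cone-rigidity statement for $[[\omega_2]]$) are precisely the content of the proof, and your plan leaves both unresolved. The appeal to \cite{S} and to ``Gompf-type symplectic sums'' is too vague; in particular, Gompf sums generally change the underlying diffeomorphism type, so they do not by themselves furnish two deformation classes on a single fixed smooth $6$-manifold.

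The paper settles both issues at once by using a concrete example of Ruan \cite{Ru}. Let $W$ be a Barlow surface with ample canonical bundle (a minimal surface of general type, homeomorphic but not diffeomorphic to $R_8:=\Bbb CP^2\#8\overline{\Bbb CP^2}$) and let $\Sigma$ be a genus-$2$ curve. Then $W\times\Sigma$ and $R_8\times\Sigma$ are diffeomorphic via a map preserving $c_1$, while their product symplectic forms are not deformation equivalent. The class $[[\omega_1]]$ is the one coming from $R_8\times\Sigma$; since $R_8$ is del Pezzo, simply rescaling one factor of the product K\"ahler--Einstein metric already drives the normalized total scalar curvature to $+\infty$, so no boundary-of-cone analysis is needed. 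For $[[\omega_2]]$, the class coming from $W\times\Sigma$, the paper computes $Z(M,[[\omega_2]])=-12\pi$ by explicit optimization in $H^2$. The crucial point --- which circumvents the cone-rigidity problem you raise --- is that this optimization is carried out over \emph{all} cohomology classes reachable by a path of symplectic forms, not just K\"ahler classes: writing $[\omega_t]=n_0(t)\pi_1^*E_0+\sum_{i=1}^8 n_i(t)\pi_1^*E_i+l(t)\pi_2^*c$, the purely topological constraint $[\omega_t]^3>0$ together with the connectedness of the path (which pins down the signs $l(t)>0$ and $n_0(t)<0$) already forces $c_1\cdot[\omega_t]^2<0$, and a short elementary optimization gives $R([\omega_t])\le -12\pi$ with equality on the K\"ahler--Einstein product metric. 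Thus even hypothetical non-K\"ahler symplectic forms in $[[\omega_2]]$ are covered, and no separate rigidity statement is required.
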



Next, we use the method of \cite{Ki2, kim} to prove our main theorem;
\begin{thm} \label{maint}
Let $(M, [[\omega]])$ be a smooth closed manifold of dimension $2n\geq 4$ with a  deformation equivalence class of symplectic forms. Then any smooth function on $M$  which is somewhere negative and somewhere zero is the scalar curvature of some smooth almost-K\"ahler metric in $[[\omega]]$.
\end{thm}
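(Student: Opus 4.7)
The plan is to combine the two pieces of technology the authors have already developed. By \cite{Ki2}, every closed symplectic manifold of dimension $\geq 4$ admits a compatible almost-K\"ahler metric with negative scalar curvature, and by rescaling the symplectic form (which preserves $[[\omega]]$ up to a constant) we may assume its scalar curvature is uniformly much smaller than $\inf_M f$. From \cite{kim, KS}, the authors have developed a local deformation scheme in Darboux charts that modifies the compatible almost-complex structure $J$, and hence the associated metric, so as to realize a prescribed scalar-curvature profile on a small ball, provided the target is sufficiently negative there. The idea is to use such local modifications to mold the scalar curvature of the initial metric into the prescribed $f$.

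\textbf{Execution.} First, pick $g_0$ in $[[\omega]]$, compatible with some $\omega_0\in[[\omega]]$, whose scalar curvature lies far below $\inf_M f$. Second, cover $M$ by finitely many Darboux charts $(U_\alpha, \omega_0 = \sum dx_i\wedge dy_i)$ with concentric smaller balls $B_\alpha\Subset U_\alpha$ still covering $M$. In each chart, $\omega_0$-compatible almost-complex structures are parametrized by smooth maps into the Siegel upper half space, and $s_g$ is a nonlinear second-order expression in this data. Third, apply the local prescription of \cite{kim} inductively: on each $B_\alpha$, perturb $J$ with support compact in $U_\alpha$ to raise $s_g$ up to $f$ on $B_\alpha$ without disturbing previously matched pieces. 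Because every perturbation only changes $J$ inside a Darboux chart and leaves $\omega_0$ untouched, the final metric is compatible with the same symplectic form $\omega_0\in[[\omega]]$.

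\textbf{Main obstacle.} The hard part is the hypothesis that $f$ is only somewhere negative and somewhere zero, not uniformly very negative: the local deformation requires the target to be bounded above by a large negative constant, so one cannot immediately prescribe $f$ near $\{f=0\}$ (nor at points where $f>0$, if any). I expect one must approximate $f$ from below by a sequence of strictly negative smooth functions $f_n\nearrow f$, realize each $f_n$ as the scalar curvature of a metric $g_n$ by the inductive procedure above, and pass to a $C^\infty$ limit. The key analytic step will be obtaining uniform control of the $J$-perturbations as $f_n\nearrow f$, so that the $g_n$ converge smoothly to a compatible almost-K\"ahler metric with $s_g=f$. The ``somewhere zero'' condition is precisely the threshold at which this limiting procedure remains controlled: it is weaker than strict negativity (which would follow at once from the local construction) but strong enough to avoid the topological obstructions that would accompany strict positivity. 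The freedom to vary $\omega_0$ within $[[\omega]]$ during the construction provides additional room to maneuver if the naive limit degenerates.
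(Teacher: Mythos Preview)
Your proposal misidentifies the content of \cite{kim, KS}. Those papers do not supply a ``local prescription'' scheme that lets you modify $J$ on a Darboux ball so as to realize an arbitrary target scalar curvature there; what they furnish is the linearization $D_gS_\omega$, its formal adjoint $(D_gS_\omega)^*\psi=(\nabla d\psi)^- - r_g^-\psi$, and the implicit-function-theorem statement that surjectivity of $D_gS_\omega$ implies \emph{local} surjectivity of $S_\omega$ in $L^p$ (Lemma~4.1 of the present paper). There is no inductive chart-by-chart construction available, and your limiting argument $f_n\nearrow f$ has no mechanism controlling the size of the $J$-perturbations as the target approaches zero; nothing you have written prevents degeneration of the sequence $g_n$.

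The paper's actual route is the Kazdan--Warner perturbation method, and the main point is to manufacture a single metric at which $D_gS_\omega$ is surjective. One first builds an $\omega$-compatible metric $g$ of strictly negative scalar curvature that coincides, on a small ball $B_\epsilon$, with the product of the Kodaira--Thurston metric and the Euclidean metric (Corollary~\ref{kim-corollary}). On that ball the equation $(\nabla d\psi)^- - r_g^-\psi=0$ can be written out explicitly and solved by hand, yielding $\psi\equiv 0$ on $B_\epsilon$; unique continuation for the bi-Laplace-type operator $(D_gS_\omega)(D_gS_\omega)^*$ then forces $\psi\equiv 0$ on all of $M$ (Lemma~\ref{Rhu-win}), so $D_gS_\omega$ is onto. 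Now if $f$ is somewhere negative and somewhere zero, then for large $c>0$ one has $\inf(cf)\leq s_g\leq 0\leq\sup(cf)$ pointwise; the approximation lemma (Lemma~4.2) produces diffeomorphisms $\phi$ isotopic to the identity with $cf\circ\phi$ arbitrarily $L^p$-close to $s_g$, and local surjectivity gives a smooth $\omega$-metric with scalar curvature $cf\circ\phi$. Pulling back by $\phi^{-1}$ and rescaling the symplectic form by $c$ finishes the proof. The ``somewhere zero'' hypothesis enters only to ensure $\sup f\geq 0>s_g$; no limit process is involved.
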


We speculate that any smooth somewhere-negative function on a closed manifold $M$ with $[[\omega]]$ might be the scalar curvature of some smooth almost-K\"ahler metric in $[[\omega]]$.
 A key step to prove it should be to show that every $(M, [[\omega]])$ has an almost-K\"ahler metric of negative constant scalar curvature in $[[\omega]]$.




With $Z$ ready, one can see that Theorem \ref{maint} contributes to answering the following question;
\begin{question} \label{cj1}
Let  $M$ be a smooth closed manifold of dimension $2n\geq 4$ admitting a symplectic structure.

 Is the (necessary and sufficient) condition for a smooth function $f$ on $M$ to be the scalar curvature of some smooth almost-K\"ahler metric as follows?

  \ \     {\rm (a)} $f$ is arbitrary, if $0<Z(M) \leq \infty$,

  \ \     {\rm (b)} $f$ is identically zero or somewhere negative, if $Z(M)=0$ and $M$ admits a scalar-flat almost-K\"ahler metric,

  \ \      {\rm (c)} $f$ is negative somewhere, if otherwise.

\smallskip
Also, is the condition for a smooth function $f$ on $M$ to be the scalar curvature of some smooth almost-K\"ahler metric in $[[\omega]]$  as follows?

      \ \      {\rm ($a^{'}$)}  $f$ is arbitrary,  if $0<Z(M, [[\omega]]) \leq \infty$,

      \ \      {\rm ($b^{'}$)}  $f$ is identically zero or somewhere negative, if $Z(M, [[\omega]])=0$ and $M$ admits a scalar-flat almost-K\"ahler metric in $[[\omega]]$,

      \ \      {\rm ($c^{'}$)}  $f$ is negative somewhere, if otherwise.

\end{question}

This paper is organized as follows. In section 2, some computations of $Z(M)$ are explained. Theorem 1.1 is proved in section 3.
In section 4, Kazdan-Warner type argument  in almost \k setting is explained.
Theorem 1.2 is proved in section 5 and 6.


\section{Some computations of $Z(M)$}

In this section we explain some basic properties and computations of $Z$ invariant in relatively simple cases.

\begin{lem}\label{sung-prop}
Let $M$ be a smooth closed  manifold of dimension $\geq 4$ admitting a symplectic structure. 
If $Y(M)\leq 0$, then $Z(M)\leq 0$.
\end{lem}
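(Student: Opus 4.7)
The plan is to invoke the topological upper bound (\ref{zmo}) and then show that $Y(M)\leq 0$ forces the relevant Chern number to be non-positive on every symplectic class. By (\ref{zmo}), it suffices to prove that under the hypothesis $Y(M)\leq 0$, every symplectic form $\omega$ on $M$ satisfies $c_1(\omega)\cdot[\omega]^{n-1}\leq 0$; taking suprema over $\omega\in[[\omega]]$ and then over deformation classes will then yield $Z(M)\leq 0$.

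I would establish this reduced, topological statement by contraposition. Suppose some symplectic $\omega$ on $M$ has $c_1(\omega)\cdot[\omega]^{n-1}>0$; the goal is to produce a Riemannian metric of positive scalar curvature on $M$, which would force $Y(M)>0$ and contradict the hypothesis. One route: start from an $\omega$-compatible almost-K\"ahler $g$. By formulas (\ref{sminus}) and (\ref{Blair}), $\int_M s_g\,dv_g\leq 4\pi c_1(\omega)\cdot[\omega]^{n-1}/(n-1)!$, with equality when $g$ is K\"ahler; so whenever a K\"ahler representative of $[[\omega]]$ is available (or even an a-K metric sufficiently close to K\"ahler), one gets $\int_M s_g\,dv_g>0$. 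It then remains to promote this integrated positivity to a metric of pointwise positive scalar curvature.

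The main obstacle I foresee will be precisely this final promotion step. For an arbitrary Riemannian metric the implication ``$\int_M s_g\,dv_g>0$ implies positive Yamabe constant of $[g]$'' fails, so any proof must use the almost-K\"ahler structure essentially. In dimension four the issue is handled by Liu's classification of symplectic $4$-manifolds with $c_1\cdot[\omega]>0$ as rational or ruled, all of which carry positive scalar curvature metrics; in higher dimensions one would need either an analogous classification or a direct construction, perhaps via a Weitzenb\"ock identity for the Dirac operator on the canonical $\mathrm{Spin}^c$ bundle associated with $\omega$, or via Gromov--Lawson-type surgery starting from the almost-K\"ahler data.
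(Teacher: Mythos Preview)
Your route through the topological bound (\ref{zmo}) and Chern numbers is an unnecessary detour that only manufactures extra difficulties (finding K\"ahler or near-K\"ahler representatives, higher-dimensional classification results, etc.). The paper argues far more directly: if $Z(M)>0$, then \emph{by definition} there is an almost-K\"ahler metric $g$ with $\int_M s_g\,dvol_g>0$; no passage through $c_1(\omega)\cdot[\omega]^{n-1}$ is needed at all. From this single inequality the paper invokes the solution of the Yamabe problem to assert that the conformal class $[g]$ contains a metric of positive constant scalar curvature, whence $Y(M)>0$, contradicting the hypothesis.

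That said, the ``main obstacle'' you isolate is precisely the only substantive step in the paper's short argument, and your worry about it is legitimate. For a general Riemannian metric the implication $\int_M s_g\,dvol_g>0 \Rightarrow Y([g])>0$ is false: on a flat torus, any conformal metric $u^{4/(n-2)}g_{\mathrm{flat}}$ with non-constant $u>0$ has $\int s\,dvol = \tfrac{4(n-1)}{n-2}\int|\nabla u|^2>0$, yet $Y([g_{\mathrm{flat}}])=0$. Thus while your overall plan is far more circuitous than the paper's, the gap you flag is real, and the paper's one-line appeal to ``the Yamabe problem'' does not, on its face, justify this step either. Your proposal does not close this gap (the suggested remedies via Liu's theorem or $\mathrm{Spin}^c$ Dirac arguments remain speculative in dimension $\geq 6$); but neither, strictly read, does the paper's proof.
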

\begin{proof}
Suppose not. Then there exists an almost K\"ahler metric $g$ on $M$ such that $\int_M s_gdvol_g>0$. Then by the Yamabe problem \cite{LP}, a conformal change of $g$ gives a metric of positive constant scalar curvature. This implies that $Y(M)>0$, thereby yielding a contradiction.
\end{proof}
\begin{lem}
Any compact minimal K\"ahler surface $M$ of Kodaira dimension 0 has $Z(M)=0$, and it is attained by a Ricci-flat \k metric.
\end{lem}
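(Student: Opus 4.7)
The plan is to establish $Z(M)\geq 0$ and $Z(M)\leq 0$ separately, and to exhibit an explicit metric realizing the supremum.

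By the Enriques--Kodaira classification, a compact minimal K\"ahler surface $M$ of Kodaira dimension zero is either a K3 surface, a complex $2$-torus, an Enriques surface, or a bielliptic surface. In each case the canonical class $K_M$ is numerically trivial, so $c_1(M)=0$ in $H^2(M;\mathbb R)$, and $M$ admits a Ricci-flat K\"ahler metric $g$ in each K\"ahler class. For a complex torus the flat metric works; for K3 this is Yau's theorem; for an Enriques or bielliptic surface one applies Yau's theorem to a deck-transformation-invariant K\"ahler class on the finite unramified K3 or $T^4$ cover and descends the resulting invariant Ricci-flat K\"ahler metric. Any such $g$ has $s_g\equiv 0$, so $\int_M s_g\,d\mathrm{vol}_g = 0$. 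Hence $Z(M,[[\omega_g]])\geq 0$ and therefore $Z(M)\geq 0$, with the ratio attained by $g$ itself.

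For the reverse inequality $Z(M)\leq 0$, it suffices by Lemma \ref{sung-prop} to verify that the Yamabe invariant $Y(M)\leq 0$ in each case. For K3 and the $4$-torus this is classical: $Y(T^4)\leq 0$ follows from Gromov--Lawson enlargeability, and $Y(\mathrm{K3})\leq 0$ from Seiberg--Witten theory (LeBrun in fact establishes equality $Y=0$ in both cases). For the Enriques and bielliptic surfaces any positive scalar curvature metric would lift to a PSC metric on the corresponding K3 or $T^4$ unramified cover; since neither of those admits such a metric, neither does the quotient. So $Y(M)\leq 0$ in each of the four cases, and Lemma \ref{sung-prop} gives $Z(M)\leq 0$.

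Combining the two bounds yields $Z(M)=0$, attained by the Ricci-flat K\"ahler metric. The one piece of genuine bookkeeping is the case-by-case treatment of the four surface classes; the Enriques case is the most delicate because the most direct $b^+\geq 2$ Seiberg--Witten obstruction is not immediately available and one must pass to the K3 double cover both to rule out positive scalar curvature and to descend an invariant Ricci-flat K\"ahler representative.
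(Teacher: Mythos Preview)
Your argument is correct and follows essentially the same route as the paper: identify the four surface types via the Kodaira--Enriques classification, use the Ricci-flat K\"ahler metric to get $Z(M)\geq 0$, and invoke Lemma~\ref{sung-prop} together with $Y(M)\leq 0$ (no positive scalar curvature) to get $Z(M)\leq 0$. The paper's proof is simply a terser version of yours, asserting the non-existence of positive scalar curvature metrics and the existence of Ricci-flat K\"ahler metrics without your explicit case-by-case justification (finite covers, Seiberg--Witten for K3, enlargeability for $T^4$).
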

\begin{proof}
By the Kodaira-Enriques classification \cite{BPV}, such $M$ is K3 or $T^4 $ or their finite quotients, and hence it admits a Ricci-flat K\"ahler metric. Thus $Z(M)\geq 0$. Since $M$ cannot admit a metric of positive scalar curvature, (in fact, $Y(M)=0$), the above lemma forces $Z(M)=0$.
\end{proof}

By using Lemma \ref{sung-prop}, one can show that if $Y(M)=0$, and $M$ admits a ``collapsing" sequence of almost-K\"aher metrics with bounded scalar curvature, then $Z(M)=0$. For example, we consider
the Kodaira-Thurston manifold  $M_{KT}$; see Section 4.
\begin{lem}
For the Kodaira-Thurston manifold  $M_{KT}$, $Z(M_{KT})=0$, and it is obtained as the limit by a collpasing sequence of almost-\k metrics of negative constant scalar curvature.
\end{lem}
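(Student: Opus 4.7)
The plan is to use Lemma~\ref{sung-prop} to bound $Z(M_{KT})$ from above, and then to produce a collapsing sequence of left-invariant almost-K\"ahler metrics of negative constant scalar curvature tending to zero to obtain the lower bound. Since $M_{KT}$ is a non-toral nilmanifold, hence enlargeable in the sense of Gromov--Lawson, it admits no metric of positive scalar curvature, so $Y(M_{KT})\leq 0$ and Lemma~\ref{sung-prop} yields $Z(M_{KT})\leq 0$.

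For the lower bound, realise $M_{KT}$ as $\Gamma\backslash (\mathrm{Nil}_3\times\mathbb{R})$ and pick a left-invariant coframe $\alpha^1,\alpha^2,\alpha^3,\alpha^4$ on $\mathrm{Nil}_3\times\mathbb{R}$ satisfying $d\alpha^3=\alpha^1\wedge\alpha^2$ and $d\alpha^i=0$ for $i\neq 3$. The form $\omega:=\alpha^1\wedge\alpha^3+\alpha^2\wedge\alpha^4$ is closed and nondegenerate, hence descends to a left-invariant symplectic form on $M_{KT}$. For each $t>0$ set
\[
 g_t\,:=\,t^2\bigl((\alpha^1)^2+(\alpha^2)^2\bigr)\,+\,t^{-2}\bigl((\alpha^3)^2+(\alpha^4)^2\bigr),
\]
and let $J_t$ be the left-invariant almost-complex structure determined by $J_te_1=t^2e_3$ and $J_te_2=t^2e_4$, where $\{e_i\}$ is the frame dual to $\{\alpha^i\}$. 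One checks directly that $g_t(\cdot,\cdot)=\omega(\cdot,J_t\cdot)$, so $(g_t,J_t,\omega)$ is almost-K\"ahler, and the volume form $d\textrm{vol}_{g_t}=\alpha^1\wedge\alpha^2\wedge\alpha^3\wedge\alpha^4$ is independent of~$t$.

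In the $g_t$-orthonormal frame $\tilde e_1=t^{-1}e_1,\ \tilde e_2=t^{-1}e_2,\ \tilde e_3=te_3,\ \tilde e_4=te_4$, the only nonzero Lie bracket becomes $[\tilde e_1,\tilde e_2]=\pm t^{-3}\tilde e_3$. Being left-invariant on a Lie group, $g_t$ has constant scalar curvature, and a routine Milnor-frame computation---essentially the scaled Heisenberg formula $s=-\tfrac12\lambda^2$ for $[X,Y]=\lambda Z$ in an orthonormal basis, together with the trivial flat factor---gives $s_{g_t}=-\tfrac{1}{2}t^{-6}$. Denoting the (fixed) total volume by $V_0>0$, one obtains
\[
 \frac{\int_{M_{KT}}s_{g_t}\,d\textrm{vol}_{g_t}}{\textrm{Vol}(g_t)^{1/2}}\,=\,-\frac{V_0^{1/2}}{2t^6}\,\longrightarrow\,0\qquad\text{as }t\to\infty,
\]
so $Z(M_{KT},[[\omega]])\geq 0$, and combining with the upper bound we conclude $Z(M_{KT})=0$. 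Since the directions $\tilde e_1,\tilde e_2$ have length $t^{-1}\to 0$, the sequence $\{g_t\}$ collapses in the Gromov--Hausdorff sense. The only slightly delicate step is the scalar curvature computation, but it is not really an obstacle: it is a direct calculation on a four-dimensional nilpotent Lie group via a Milnor frame.
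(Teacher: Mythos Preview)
Your argument is correct and reaches the same conclusion, but by a genuinely different family from the paper's. For the upper bound you appeal to Gromov--Lawson enlargeability of nilmanifolds, whereas the paper cites Seiberg--Witten theory; either route gives $Y(M_{KT})\le 0$ and hence $Z(M_{KT})\le 0$ via Lemma~\ref{sung-prop}. For the lower bound the paper keeps the left-invariant metric $dx^2+dy^2+(dz-x\,dy)^2+dt^2$ on $\Bbb R^4$ fixed and shrinks the period $d$ of the flat $S^1$ factor in the lattice, so that the scalar curvature stays equal to $-\tfrac12$ while the total volume tends to $0$. You instead fix the lattice and the symplectic form and rescale the left-invariant metric anisotropically, keeping the volume constant while driving the constant scalar curvature $-\tfrac12 t^{-6}$ to $0^-$. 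Your version has the pleasant feature that $\omega$ is literally unchanged along the family; the paper's version is marginally simpler because no curvature recomputation is needed.

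One small slip: the sentence ``the directions $\tilde e_1,\tilde e_2$ have length $t^{-1}\to 0$'' is not right, since the $\tilde e_i$ are $g_t$-orthonormal by construction. In your family it is the $e_3,e_4$ circles (the Heisenberg center and the $\Bbb R$-factor) whose $g_t$-lengths scale like $t^{-1}\to 0$, while the $e_1,e_2$ directions grow like $t$; thus the diameter diverges even as the injectivity radius goes to zero. This mislabeling does not affect the computation of $Z(M_{KT})$, which only requires $\int s_{g_t}\,d\mathrm{vol}_{g_t}/\mathrm{Vol}(g_t)^{1/2}\to 0$.
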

\begin{proof}
$M_{KT}$ never admits a metric of positive scalar curvature;  one can use Seiberg-Witten theory, see \cite{OO}. So, $Y(M_{KT})\leq 0$. Thus $Z(M_{KT})\leq 0$ by Lemma \ref{sung-prop}.

We let $\Bbb R^4=\{(x,y,z,t)\}$ endowed with the metric $dx^2 + dy^2 + (dz - x dy)^2 + dt^2$. Then $M_{KT}$ with an almost-K\"ahler metric is obtained as the quotient of $\Bbb R^4$ by the group generated by isometric actions,
$$\gamma_1(x,y,z,t)=(x+1,y,y+z,t),\ \ \ \ \gamma_2(x,y,z,t)=(x,y+1,z,t),$$
$$\gamma_3(x,y,z,t)=(x,y,z+1,t),\ \ \ \ \gamma_4(x,y,z,t)=(x,y,z,t+d),$$ where $d$ is any positive constant.

For any $d>0$, the scalar curvature is $-\frac{1}{2}$. (See the curvature computations in Lemma \ref{Rhu-win}.) But by taking $d>0$ sufficiently small, we can get an almost-K\"ahler metric $g$ on $M_{KT}$ such that
$\int_{M_{KT}} s_g  d\textrm{vol}_g/ (\textrm{Vol}_g)^{\frac{1}{2}}$ is arbitrarily close to 0. Therefore  $Z(M_{KT})=0$.
\end{proof}

Now $M_{KT}$, with  $Z(M_{KT})=0$, never admits a scalar-flat almost-K\"ahler metric, because such a metric has to be a K\"ahler metric, which is not allowed on $M_{KT}$ with $b_1(M_{KT})=3$. Therefore one can expect that  $M_{KT}$ belongs to the category (c) in the classification of Question \ref{cj1}. Indeed this was already proved in \cite{kim}. Note that $M_{KT}$ is a nilmanifold. One may expect to find more examples of symplectic solvmanifolds with vanishing $Z$ value and
prove them to be in the category (c).



 Now let us give an example of $Z(M)>0$.
\begin{lem}
For the complex projective plane $\Bbb CP^2$, $Z(\Bbb CP^2)=12\sqrt{2}\pi$, and it is attained by a \k Einstein metric.
\end{lem}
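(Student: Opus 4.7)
The plan is to combine the cohomological upper bound (\ref{zmo}) with the observation that the Fubini--Study K\"ahler--Einstein metric saturates it.

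First I would use the simple cohomology of $\bcp^2$. One has $H^2(\bcp^2;\Bbb{R})\cong\Bbb{R}$ generated by a class $\alpha$ with $\int_{\bcp^2}\alpha^2=1$, and $c_1(T\bcp^2)=3\alpha$. Any symplectic form $\omega$ compatible with the standard orientation satisfies $[\omega]=r\alpha$ for some $r>0$. Substituting this into the bound (\ref{zmo}) and using that the resulting expression is scale-invariant in $r$, for $n=2$ one obtains
$$Z(\bcp^2,[[\omega]])\;\leq\;\frac{4\pi\,(3\alpha)\cdot(r\alpha)}{\left(r^{2}/2\right)^{1/2}}\;=\;12\sqrt{2}\,\pi$$
for every symplectic deformation class $[[\omega]]$ on $\bcp^2$. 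Taking the supremum over such classes gives $Z(\bcp^2)\leq 12\sqrt{2}\,\pi$.

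Second, I would show the bound is attained by the Fubini--Study metric $g_{FS}$. Since $g_{FS}$ is K\"ahler, its almost-complex structure is integrable, so the non-negative correction term that separates the pointwise formula (\ref{sminus}) for $s_g$ from the Blair-type identity (\ref{Blair}) vanishes, and the inequality (\ref{zmo}) becomes an equality for $g_{FS}$. A direct substitution with $[\omega_{FS}]=r\alpha$ yields
$$\frac{\int_{\bcp^2} s_{g_{FS}}\,d\mv_{g_{FS}}}{(\mv_{g_{FS}})^{1/2}}=12\sqrt{2}\,\pi,$$
so $Z(\bcp^2,[[\omega_{FS}]])\geq 12\sqrt{2}\,\pi$ and therefore $Z(\bcp^2)=12\sqrt{2}\,\pi$, realized by the K\"ahler--Einstein metric $g_{FS}$.

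The whole argument reduces to an essentially one-line cohomological computation, so no serious analytic difficulty arises. The only point worth flagging is the equality case in (\ref{zmo}): it is sharp precisely when the correction term (arising from the Nijenhuis tensor / $\nabla J$) vanishes, i.e.\ on integrable compatible $J$. Because $\bcp^2$ carries the Fubini--Study complex structure, such a saturating metric already exists, and the main ``obstacle'' is simply remembering to check this equality case.
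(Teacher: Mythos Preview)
Your overall strategy matches the paper's, but there is a genuine gap: you restrict to symplectic forms ``compatible with the standard orientation'' and set $c_1(\omega)=3\alpha$ without justification. Since $Z(M)$ is defined as a \emph{smooth} invariant, you must a priori allow symplectic forms on the underlying manifold $\bcp^2$ inducing the opposite orientation; and for an arbitrary symplectic $\omega$, the class $c_1(\omega)$ appearing in (\ref{zmo}) is the first Chern class of an $\omega$-compatible almost complex structure, not of the standard complex structure. The paper disposes of both issues simultaneously via the relation $c_1^2=2\chi+3\tau$: on $\overline{\bcp^2}$ this would give $c_1^2=3$, impossible in $H^2(\bcp^2;\Bbb Z)\cong\Bbb Z$, so the reversed orientation supports no almost complex (hence no symplectic) structure; on $\bcp^2$ with the standard orientation it gives $c_1(\omega)^2=9$, whence $c_1(\omega)=\pm 3[H]$ for \emph{every} $\omega$. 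The sign ambiguity is then harmless for the upper bound, since the minus sign only makes the right side of (\ref{zmo}) negative.

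Once this orientation/Chern-class step is supplied, your argument and the paper's coincide: both feed the cohomology into (\ref{zmo}) (equivalently, Blair's formula (\ref{Blair}) together with (\ref{sminus})) to get the bound $12\sqrt{2}\pi$, and both observe that the Fubini--Study K\"ahler--Einstein metric, being integrable, saturates it.
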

\begin{proof}
First we claim that $\Bbb CP^2$ with the reversed orientation does not support any almost complex structure. Suppose it does. Recall that any closed almost complex 4-manifold satisfy
\begin{eqnarray*}
c_1^2=2\chi+3\tau,
\end{eqnarray*}
where $\chi$ and $\tau$ respectively denote Euler characteristic and signature. Then this formula
gives $c_1^2=3$, which is impossible because of the fact that $c_1\in H^2(\Bbb CP^2;\Bbb Z)$. Thus  $c_1^2$ must be 9 so that $c_1$ is  $3[H]$  or $-3[H]$, where $[H]$ is the hyperplane class.

For any symplectic form $\omega$ on $\Bbb CP^2$, $[\omega]$ must be a nonzero multiple of $[H]$. We apply the Blair formula (\ref{Blair}) to get
\begin{eqnarray*}
  \frac{\int_{\Bbb CP^2} s_gdvol_g}{(\textrm{Vol}_g)^{\frac{1}{2}}} &\leq&\frac{\int_{\Bbb CP^2}  \frac{1}{2} (s_g + s^{*}_g)\ dvol_g}{(\textrm{Vol}_g)^{\frac{1}{2}}}\\ &=&  \frac{4\pi c_1(\omega)\cdot [\omega]}{(\frac{[\omega]\cdot[\omega]}{2})^{\frac{1}{2}}}\\
&=& \pm 12\sqrt{2}\pi,
\end{eqnarray*}
for any $\omega$-almost-K\"ahler metric $g$ on $\Bbb CP^2$.
In the above inequality, we used a relation between $s$ and the star-scalar curvature $s^*$  \cite{AD};
\begin{equation} \label{sminus}
  s^* -s =\frac{1}{2} |\nabla J|^2 \geq 0.
\end{equation}

Therefore $Z(\Bbb CP^2,[[\omega]])\leq 12\sqrt{2}\pi$. In fact the Fubini-Study metric with $\omega_{FS}$ saturates this  upper bound, so we finally get
$$Z(\Bbb CP^2)=Z(\Bbb CP^2,[[\omega_{FS}]])=12\sqrt{2}\pi.$$
\end{proof}

\begin{remark}
{\rm  In the above we only treated a few simple examples.
However, we expect that $Z$ invariant is fairly computable under some symplectic surgeries. For instance, one can see that
$Z( \hat{T}^4 )=0$, where $\hat{T}^4 $ is the blow-up at one point of the \k $4$-torus. In fact,
one only needs to check that LeBrun's argument in the proof of theorem 3 of \cite{Le3}
still works in almost \k context. This gives $Z( \hat{T}^4 ) \geq 0$.
Together with Seiberg-Witten theory one gets $Z( \hat{T}^4 ) = 0$. A similar argument, albeit in \k case, may be found in \cite{SS}.}
\end{remark}

\section{Symplectic deformation classes on a manifold with distinct signs of $Z(\cdot,  [[\omega]])$}

\noindent In this section we shall prove Theorem \ref{main2}.

We use one of the examples in \cite{Ru}. Let $W$  be a complex Barlow surface, which is a minimal complex surface of general type homeomorphic, but not diffeomorphic, to $R_8$, the blown-up complex surface at 8 points in general position in the complex projective plane. By a small deformation of complex structure we may assume that $W$ has ample canonical line bundle \cite{CL}. Then by Yau's solution of Calabi conjecture,  $W$ and $R_8$ admit a \k Einstein metric of negative (and positive, respectively) scalar curvature. Ruan showed that for a compact Riemann surface $\Sigma$,
  $R_8 \times \Sigma$ and $W \times \Sigma$ are diffeomorphic but their natural symplectic structures are not deformation equivalent.

  We prove;

\begin{prop} \label{riem} Let $W$ be a Barlow surface with ample canonical line bundle  and $\Sigma$ be a Riemann surface of genus 2. Consider a \k Einstein  metric of negative scalar curvature on $W$ with \k form $\omega_W$ on $W$ and a \k form $\omega_{\Sigma}$ on $\Sigma$ with constant negative scalar curvature.

Then $Z(W \times \Sigma, [[\omega_W + \omega_{\Sigma}]])= -12\pi $, and it is attained by a \k Einstein metric.
\end{prop}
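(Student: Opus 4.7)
The plan is to match the lower bound $Z(W\times\Sigma,[[\omega_W+\omega_\Sigma]])\ge -12\pi$, obtained from a K\"ahler--Einstein product metric, with the upper bound $Z\le -12\pi$ extracted from the cohomological estimate (\ref{zmo}) via the Hodge index theorem on the Barlow surface.

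For the lower bound, Yau's solution of the Calabi conjecture gives a K\"ahler--Einstein metric $g_W$ on $(W,[\omega_W])$ with $\operatorname{Ric}(g_W)=\lambda g_W$ for some $\lambda<0$; rescaling the hyperbolic metric on $\Sigma$, I can arrange the Einstein constant there to coincide with $\lambda$, so that the product is K\"ahler--Einstein on $W\times\Sigma$ with K\"ahler form $\omega_W+c\,\omega_\Sigma$ for some $c>0$. This form lies in $[[\omega_W+\omega_\Sigma]]$ via the symplectic homotopy $\omega_W+((1-t)+tc)\omega_\Sigma$, $t\in[0,1]$. Using the Barlow Chern numbers $K_W^2=1$ and $\chi(\Sigma)=-2$,
\begin{equation*}
c_1(W\times\Sigma)^3=3\,c_1(W)^2\cdot\int_\Sigma c_1(\Sigma)=3\cdot 1\cdot(-2)=-6,
\end{equation*}
and substituting $s_g=6\lambda$ together with $[\omega]=(2\pi/\lambda)c_1$ into $\int_{W\times\Sigma}s_g\,d\mv_g/\mv_g^{2/3}$ yields exactly $-12\pi$.

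For the upper bound I would apply (\ref{zmo}). Since the Barlow surface is simply connected, the K\"unneth formula gives $H^2(W\times\Sigma)=H^2(W)\oplus H^2(\Sigma)$, and any class $[\omega]$ decomposes uniquely as $u+v$. Setting $p=u^2$, $q=K_W\cdot u$, $r=\int_\Sigma v$, and exploiting the vanishings $u^3=0$ and $v^2=0$ in $H^*(W\times\Sigma)$, the target functional collapses to
\begin{equation*}
F([\omega])=\frac{4\pi\,c_1\cdot[\omega]^2/2!}{([\omega]^3/3!)^{2/3}}=-4\pi\cdot\frac{p+qr}{(pr/2)^{2/3}}.
\end{equation*}
Positivity of $[\omega]^3=3pr$ and continuity along any symplectic homotopy from $\omega_W+\omega_\Sigma$ keep $p,r>0$ throughout. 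Since $b^+(W)=1$ and $K_W^2=1>0$, the Hodge index theorem forces $q^2\ge p$ (equality iff $u$ is proportional to $K_W$), and $q$ cannot cross zero (otherwise $u\in K_W^\perp$ where the intersection form is negative definite, contradicting $p>0$), so $q$ stays positive throughout. An elementary constrained minimization then shows $(p+qr)/(pr/2)^{2/3}\ge 3$ on this admissible region, with equality precisely when $p=q^2$ and $r=2q$, which is the cohomological data of the product K\"ahler--Einstein class from the first step. Thus $F\le -12\pi$ on $[[\omega_W+\omega_\Sigma]]$, and the optimum is realized by the K\"ahler--Einstein metric already produced.

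The main obstacle is the sign control on $q$ along an arbitrary deformation, which crucially uses the Hodge index features of the Barlow surface ($b^+=1$ together with $K_W$ ample of self-intersection $1$); the remainder is K\"unneth bookkeeping and a one-variable calculus exercise.
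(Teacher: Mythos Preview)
Your argument is correct and follows the same overall strategy as the paper: establish the lower bound via the product K\"ahler--Einstein metric, and the upper bound via the cohomological estimate (\ref{zmo}) together with a calculus optimization over the K\"unneth components of $[\omega]$.

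The execution differs in one pleasant way. The paper works in the explicit basis $E_0,\dots,E_8$ of $H^2(W;\Bbb Z)$ (via the homeomorphism $W\cong R_8$), writes $[\omega_t]=n_0E_0+\sum n_iE_i+l\,c$, and uses the Cauchy--Schwarz bound $|\sum n_i|\le\sqrt{8}\sqrt{\sum n_i^2}$ together with a separate one-variable calculus fact $\frac{(3-2\sqrt{2}\sqrt{y})^2}{1-y}\ge 1$ to obtain the key inequality $B^2/A\ge 1$, where $A=n_0^2-\sum n_i^2$ and $B=3n_0+\sum n_i$. In your variables this is exactly $q^2\ge p$, which you get in one stroke from the Hodge index theorem ($b^+(W)=1$, $K_W^2=1$), as the light-cone inequality $(K_W\cdot u)^2\ge K_W^2\,u^2$. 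Your sign control on $q$ (``$q=0$ forces $u\in K_W^\perp$, hence $p\le 0$'') is likewise the basis-free version of the paper's continuity argument for $n_0<0$. The remaining optimization is the same computation dressed differently: the paper optimizes first in $l$ and then in $y=\sum n_i^2/n_0^2$, while you optimize in $r$ and then saturate $p=q^2$; both land on the K\"ahler--Einstein class. Your formulation is a bit more conceptual and makes it transparent which features of $W$ are actually used (simply connected, $b^+=1$, $K_W$ ample with $K_W^2=1$).
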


\begin{proof}  We recall a few results about $W$ from \cite[Section 4]{Ru};
there is a homeomorphism of $W$ onto $R_8$ which preserves the Chern class $c_1$
and there is a diffeomorphism of $W \times \Sigma$ onto $R_8 \times \Sigma$ which preserves $c_1$.

Then, the first Chern class of $W$ can be written as
 $c_1(W) = 3E_0 - \sum_{i=1}^{8} E_i \in  H^2(W, \mathbb{R}) \cong \mathbb{R}^9$, where  $E_i$, $i=0, \cdots 8$, is the Poincare dual of a homology class $\tilde{E}_i $, $i=0, \cdots 8$  so that
  $\tilde{E}_i$, $i=0, \cdots 8$, form a basis of $H_2(W, \mathbb{Z}) \cong \mathbb{Z}^9$ and their intersections satisfy $\tilde{E}_i \cdot \tilde{E}_j = \epsilon_i \delta_{ij}$, where $\epsilon_0 =1$
  and $\epsilon_i =-1$ for $i \geq 1$.
 So, in this basis the intersection form
 becomes
$$I=
\left[
  \begin{array}{cccc}
    1 & 0 &  \cdot  \ \ \  \cdot & 0 \\
    0 & -1 & \cdot \  \ \ \cdot & 0 \\
    . & . & \cdot \ \ \ \cdot & 0 \\
     . & . & \cdot \ \ \ \cdot & 0 \\
    0 & 0 & 0 & -1 \\
  \end{array}
\right].
$$


  We have the orientation of $W$ induced by the complex structure and the fundamental class $[W] \in H_4(W, \mathbb{Z})\cong \mathbb{Z}$. As $\omega_W$ is \k Einstein of negative scalar curvature, we may get $[\omega_W] = -3E_0 +  \sum_{i=1}^{8}  E_i$ by scaling if necessary.


\medskip
Now a compact Riemann surface $\Sigma$ of genus $2$ has its fundamental class $[\Sigma] \in H_2(\Sigma, \mathbb{Z}) \cong \mathbb{Z}$. Let $c$  be the generator of
$H^2(\Sigma, \mathbb{Z}) \cong \mathbb{Z}$ such that the pairing $\langle c, [\Sigma] \rangle=1$. Then $c_1(\Sigma) = -2c$.
We consider a \k form $\omega_h$ with constant negative scalar curvature such that $[\omega_h]= c \in H^2(\Sigma, \mathbb{R}) \cong \mathbb{R}$.

 \bigskip
Set $M = W \times \Sigma$.
By K\"{u}nneth theorem, $H^2(M, \mathbb{R})   \cong \pi_1^* H^2(W) \oplus \pi_2^* H^2(\Sigma) \cong \mathbb{R}^9 \oplus  \mathbb{R}$, where $\pi_i$ are the projection of $M$ onto the i-th factor.
 Then,
 $$c_1(M) =  \pi_1^* c_1(W) +  \pi_2^* c_1(\Sigma) =  \pi_1^*( 3E_0  - \sum_{i=1}^{8} E_i  )   -2  \pi_2^*c  \in H^2(M, \mathbb{R}).$$
   Consider any smooth path of symplectic forms $\omega_t$, $0 \leq t \leq \delta$,  on $M$ such that $\omega_0 = \omega_W + \omega_h$.
   We may write
   $$[\omega_t] = n_0 (t) \pi_1^* E_0+ \sum_{i=1}^{8} n_i (t) \pi_1^* E_i  + l(t)  \pi_2^*c  \in H^2(M, \mathbb{R})$$
   for some smooth functions $n_i (t), l(t)$, $i=0, \cdots, 8$.  As they are connected, their first Chern class $c_1(\omega_t)= c_1(M)$. We have;
    \begin{eqnarray} \label{omet3}
    \ \ \ \ \ \ \ \  [\omega_t]^3 ([W \times \Sigma]) & = [  n_0 (t) \pi_1^* E_0+ \sum_{i=1}^{8} n_i (t) \pi_1^* E_i  + l(t)  \pi_2^*c]^3([W \times \Sigma])  \\
  &= 3 \{ n_0^2(t)  -   \sum_{i=1}^{8} n_i^2(t) \} l(t) >0. \ \ \ \   \ \ \ \ \ \ \   \ \ \ \ \ \ \  \ \ \ \   \ \ \ \nonumber
    \end{eqnarray}
    As $l(0) =1>0$, $l(t) >0$. So, $ n_0^2(t)  >   \sum_{i=1}^{8} n_i^2(t)$.  From above, we know that $n_0(0)=-3<0$. As $n_0^2(t)>  0$, we get $n_0(t)<0$.
   \begin{equation} c_1 \cdot [\omega_t]^2 ([W \times \Sigma]) = -2 \{ n_0^2(t) - \sum_{i=1}^{8} n_i^2(t) \}  + 2 l(t) \{ 3 n_0(t) + \sum_{i=1}^{8} n_i(t)   \}.
   \end{equation}

  \noindent Since $n_0^2(t)>  \sum_{i=1}^{8}  n_i^2(t)$ and  $| \sum_{i=1}^{8} n_i (t) | \leq \sqrt{8} \sqrt{ \sum_{i=1}^{8} n_i^2}$, we get
    \begin{eqnarray} \label{nit}
     \ \ \ \ 3 n_0(t) + \sum_{i=1}^{8} n_i(t) & \leq 3 n_0(t) +   2 \sqrt{2} \sqrt{ \sum_{i=1}^{8} n_i^2(t)}  \ \ \ \  \ \ \ \ \ \ \ \ \ \ \ \ \ \  \ \ \  \\
    & < 3 n_0(t) +2 \sqrt{2} \sqrt{n_0^2(t)}
     = (3-2\sqrt{2}) n_0 (t) <0. \nonumber
    \end{eqnarray}

So, $c_1 \cdot [\omega_t]^2 ([W \times \Sigma]) <0$.
Putting $A=  n_0^2(t) - \sum_{i=1}^{8} n_i^2(t)$ and $B=3 n_0(t) + \sum_{i=1}^{8} n_i(t)$,  we have
$$\frac{c_1 [\omega_t]^2}{ [\omega_t^3]^{2/3}} =\frac{2}{3^{2/3}}  \{ \frac{  -A  +  l(t) B}{A^{2/3} l(t)^{2/3} } \}= \frac{2}{3^{2/3}} \{ \frac{  -A^{1/3} }{ l(t)^{2/3} }      +   \frac{   l(t)^{1/3} B}{A^{2/3} }  \}. $$

 For $a, b<0$ and $ x>0$,  set $h(x):=\frac{2}{3^{2/3}} (\frac{a}{x^2} + \frac{x}{b})$.
 Since $h^{'} (x) = \frac{2}{3^{2/3}} (\frac{x^3 - 2ab}{x^3 b})$,
 $h(x)$ has maximum when $x= (2ab)^{1/3}$.
So we get
 $h(x) \leq   (\frac{6a}{b^2})^{\frac{1}{3}}$.
With $a= -A^{1/3} $,  $b= \frac{A^{2/3}}{B} $ and  $x= l(t)^{1/3} $,   this gives
 $$ \frac{c_1 [\omega_t]^2}{ [\omega_t^3]^{2/3}}  \leq  - 6^{\frac{1}{3}}(\frac{B^2}{A})^{\frac{1}{3}} ,$$ and
 from (\ref{nit}) $$\frac{B^2}{A} \geq  \frac{\{3 n_0(t) +2 \sqrt{2} \sqrt{ \sum_{i=1}^{8} n_i^2(t)}\}^2}{n_0^2(t)  -   \sum_{i=1}^{8} n_i^2(t)} = \frac{(3- 2\sqrt{2} \sqrt{y})^2}{1-y}$$ where
 $y=   \sum_{i=1}^{8} \frac{n_i^2(t)}{n_0^2(t)}$.
 By calculus, $\frac{(3- 2\sqrt{2} \sqrt{y})^2}{1-y} \geq 1$ for $y \in [0,1)$ with equality at $y= \frac{8}{9}$.

We have
 $ \frac{c_1 [\omega_t]^2}{ [\omega_t^3]^{2/3}}  \leq  - 6^{\frac{1}{3}} $; the equality is achieved exactly when $n_0(t) =-3$, $n_i(t) =1$, $i=1, \cdots, 8$ and $l(t)=2$ modulo scaling, i.e. when $[\omega_t]$ is a positive multiple of $-c_1(M)$. The \k form of a product \k Einstein metric
 on $M= W \times \Sigma$ belongs to this class.

As the expression $ \frac{ 4\pi c_1(\omega)
\cdot \frac{[\omega]^{n-1}}{(n-1)!}}{(\frac{[\omega]^n}{n!})^{\frac{n-1}{n}}}$ is invariant under a change $\omega \mapsto \phi^*(\omega)$ by a diffeomorphism $\phi$,  so from (\ref{zmo}),
$$Z(M, [[\omega_0]] )  \leq  \sup_{\omega \in [[\omega_0]]} 2\pi \cdot 6^{2/3} \frac{c_1 [\omega]^2}{[\omega^3]^{2/3}} \leq -12\pi.  $$  As the equality is attained by a \k Einstein metric, $Z(M, [[\omega_0]] ) = -12\pi  $.
\end{proof}

The next corollary yields Theorem \ref{main2}.
\begin{cor} There exists a smooth closed 6-d manifold with distinct symplectic deformation equivalence classes $[[\omega_1]]$  and $[[\omega_2]]$ such that $Z(M, [[\omega_1]] ) =\infty $ which is obtained by a sequence of \k metrics of positive constant scalar curvature, and $Z(M, [[\omega_2]] ) <0$ .
\end{cor}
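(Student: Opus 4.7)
The strategy is to take $M = R_8 \times \Sigma$, which by Ruan's theorem is diffeomorphic to $W \times \Sigma$, and identify the two required deformation classes on $M$. Let $[[\omega_2]]$ be the pullback under Ruan's diffeomorphism of the natural product class on $W \times \Sigma$, and let $[[\omega_1]]$ be the natural product deformation class on $R_8 \times \Sigma$; Ruan's result is precisely the statement that $[[\omega_1]] \neq [[\omega_2]]$. Proposition~\ref{riem} applied to $W \times \Sigma$ then yields $Z(M,[[\omega_2]]) = -12\pi < 0$. It remains to show $Z(M,[[\omega_1]]) = \infty$, realized by a sequence of K\"ahler metrics of positive constant scalar curvature.

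For this I use an explicit scaling family of product K\"ahler metrics. By Tian's theorem, the degree-$1$ del Pezzo surface $R_8$ (blow-up of $\mathbb{CP}^2$ at $8$ points in general position) admits a K\"ahler--Einstein metric $(g_0,\omega_0)$ of positive constant scalar curvature $s_0 > 0$; let $(g_h,\omega_h)$ be the hyperbolic K\"ahler metric on the genus-$2$ Riemann surface $\Sigma$, with constant scalar curvature $s_h < 0$. For each $\mu > 0$ put
\[
g_\mu \;:=\; g_0 \oplus \mu\, g_h, \qquad \omega_\mu \;:=\; \omega_0 + \mu\,\omega_h,
\]
so $(g_\mu,\omega_\mu)$ is a K\"ahler structure in $[[\omega_1]]$ with constant scalar curvature $s_\mu = s_0 + s_h/\mu$, which is positive as soon as $\mu > -s_h/s_0$. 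Since scaling a two-dimensional Riemannian metric by $\mu$ multiplies its volume form by $\mu$, writing $V_0 = \textrm{Vol}(R_8,g_0)$ and $V_h = \textrm{Vol}(\Sigma,g_h)$ a direct computation gives
\[
\frac{\int_M s_{g_\mu}\,d\textrm{vol}_{g_\mu}}{(\textrm{Vol}_{g_\mu})^{2/3}} \;=\; (s_0\mu + s_h)\,(V_0 V_h)^{1/3}\,\mu^{-2/3},
\]
which is asymptotic to $s_0 (V_0 V_h)^{1/3}\, \mu^{1/3}\to +\infty$ as $\mu\to\infty$. Hence $Z(M,[[\omega_1]]) = \infty$, attained in the claimed manner.

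The deep ingredients (Ruan's non-deformation-equivalence and Tian's existence of K\"ahler--Einstein metrics on $R_8$) are the only nontrivial inputs; once they are in hand, no step in the argument is technically hard. The only thing one needs to verify is that the family $\omega_\mu$ stays within a single deformation class, which is automatic because $\mu\mapsto\omega_\mu$ is a smooth path of symplectic forms. Conceptually the main obstacle is securing a CSCK K\"ahler model on $R_8$ with $s_0>0$; the scaling calculation then guarantees that this positive term survives division by $(\textrm{Vol}_{g_\mu})^{2/3}\sim\mu^{2/3}$ and drives the normalized total scalar curvature to $+\infty$.
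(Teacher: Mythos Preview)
Your proof is correct and follows the same route as the paper: identify $M$ with $R_8\times\Sigma$, take $[[\omega_2]]$ from the Barlow side via Ruan's diffeomorphism and Proposition~\ref{riem}, and obtain $Z(M,[[\omega_1]])=\infty$ by scaling one factor of a product K\"ahler metric built from a positive K\"ahler--Einstein metric on $R_8$ and the hyperbolic metric on $\Sigma$. The paper compresses the last step into the single sentence ``one can easily get $Z(M,[[\omega_1]])=\infty$ by scaling on one factor of the product,'' whereas you carry out the computation explicitly; the content is the same.
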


\begin{proof}
Consider $V \times \Sigma$ and $W \times \Sigma$ where $V=R_{8}$,
 $ \  W$ and  $\Sigma$ are as in Proposition \ref{riem}.
 They are diffeomorphic but their natural symplectic structures are not deformation equivalent.
 Let $\omega_1$ be the \k form of a product \k Einstein metric on  $V \times \Sigma$.  One can easily get $Z(M, [[\omega_1]] ) =\infty $ by scaling on one factor of the product.
Let  $\omega_2$ be the $\omega_W + \omega_{\Sigma}$ in Proposition \ref{riem}.
\end{proof}

\begin{remark}
{\rm  Theorem \ref{main2} and its proof hint that much more examples may be obtained in a similar way.
Just for another instance, considering the example $M= R_8 \times R_8$ of Catanese and LeBrun in \cite{CL}, we could check that the smooth 8-dimensional manifold $M$ admits distinct symplectic deformation equivalence classes $[[\omega_i]]$, $i=1,2$ such that $Z(M, [[\omega_1]] ) =\infty $ and $Z(M, [[\omega_2]] ) <0$. }
\end{remark}

\section{Kazdan-Warner method adapted to almost \k metrics}
Our method to prove Theorem \ref{maint} is an adaptation of ordinary scalar curvature theory to an almost-K\"ahler setting, and recall and modify the material explained in \cite{KS, kim}.

Let $\frak{M}$ denote the space of Riemannian metrics on a given smooth manifold $M$ of real dimension $2n$, and we regard $\frak{M}$ as the completion of smooth metrics with respect to $L_2^p$-norm for $p> 2n$. Given a symplectic form $\omega$ on $M$, let $\Omega_\omega$ be the subspace of $\omega$-almost-K\"ahler metrics on $M$. The space $\Omega_{\omega}$ is a smooth Banach manifold with the above norm, and its tangent space  $T_g\Omega_\omega$ at $g:=\omega(\cdot,J\cdot)$ consists of symmetric $(0,2)$ tensors $h$ which are $J$-anti-invariant, i.e.  $$h^+(X,Y):=\frac{1}{2}(h(X,Y)+h(JX,JY))=0$$ for all $X,Y\in TM$.

The space $\Omega_{\omega}$ admits  a natural
parametrization by the exponential map; for $g \in \Omega_{\omega}$, define ${\mathcal
E}_g : T_{g} \Omega_{\omega} \rightarrow \Omega_{\omega}$
by
 ${\mathcal E}_g(h) = g\cdot e^h$ with $$g \cdot e^h(X,Y) = g (X, e^{\hat{h}}(Y))=g (X, Y + \sum_{k=1}^{\infty} \frac{1}{k!} \hat{h}^kY),$$ where $X,Y\in TM$, and $\hat{h}$ is the
$(1,1)$-tensor field lifted from $h$ with respect to $g$. Clearly we have $$\frac{d\{ g \cdot e^{th} \}}{dt}|_{t=0} =h.$$
Given $g \in \Omega_{\omega}$ with
corresponding $J$, any other metric $\tilde{g}$ in $
\Omega_{\omega}$ can be expressed as
\begin{equation}
\tilde{g}= g\cdot e^h, \label{geh}
\end{equation}
where $h$ is a $J$-anti-invariant symmetric $(0,2)$-tensor field
uniquely determined.

\bigskip
For the scalar curvature functional $S_\omega:\Omega_\omega\rightarrow L^p(M)$,  the derivative at $g$ is given by
\begin{equation} D_gS_\omega(h)=\delta_g\delta_g(h)-g(r_g,h) \nonumber \end{equation} for $h\in T_g\Omega_\omega$, where $r_g$ is the Ricci curvature of $g$, and its formal adjoint is given by
$$(D_gS_\omega)^*(\psi)=(\nabla d\psi)^--r_g^-\psi,$$ where $A^-$ for a symmetric $(0,2)$ tensor $A$ denotes the $J$-anti-invariant part
$\frac{1}{2}(A(\cdot,\cdot)-A(J\cdot,J\cdot))$.

The followings are key facts for the Kazdan-Warner type problem in the almost-K\"ahler setting.
\begin{lem} \cite{KS}, \cite[Lemma 1]{kim}
If $D_gS_\omega$ is surjective  for a smooth $g \in \Omega_{\omega}$, then $S_\omega$ is locally surjective at $g$, i.e. there exists an $\epsilon > 0$ such that for any $f\in L^p(M)$ with $||f-S_\omega(g)||< \epsilon$ there is an $L_2^p$ almost-K\"ahler metric $\tilde{g} \in  \Omega_{\omega}$ satisfying $f=S_\omega(\tilde{g})$. Furthermore if $f$ is $C^\infty$, so is $\tilde{g}$.
\end{lem}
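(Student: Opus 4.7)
The plan is to carry out the standard Banach-space submersion argument in the spirit of Kazdan--Warner, adapted to the almost-K\"ahler setting. Working in the exponential chart $\mathcal{E}_g:T_g\Omega_\omega\to\Omega_\omega$, the functional $S_\omega$ becomes a $C^1$ map from an $L_2^p$-neighborhood of $0$ in $T_g\Omega_\omega$ (the completion of smooth $J$-anti-invariant symmetric $(0,2)$-tensors in the $L_2^p$ norm) into $L^p(M)$, with derivative at $0$ equal to $D_gS_\omega$. By the Banach-space inverse function theorem it therefore suffices to exhibit a bounded right inverse of $D_gS_\omega$ and then run a regularity bootstrap.

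The right inverse is constructed by the usual $LL^{*}$ trick. Consider the composition
\[
  P:=D_gS_\omega\circ(D_gS_\omega)^{*}:L_4^p(M,\mathbb{R})\longrightarrow L^p(M,\mathbb{R}),
\]
which is formally self-adjoint. A routine principal-symbol computation shows that $P$ is elliptic of order four: for $\xi\in T^{*}_xM$ the symbol of $\psi\mapsto(\nabla d\psi)^{-}$ is the $J$-anti-invariant projection of $\xi\otimes\xi$, and contracting twice with $\xi^{\sharp}$ yields $\tfrac{1}{2}\bigl(|\xi|^{4}-g(\xi,J\xi)^{2}\bigr)=\tfrac{1}{2}|\xi|^{4}$, since $J$ is $g$-orthogonal. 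The assumed surjectivity of $D_gS_\omega$ is equivalent, via integration by parts applied to the explicit formula $(D_gS_\omega)^{*}(\psi)=(\nabla d\psi)^{-}-r_g^{-}\psi$, to injectivity of $(D_gS_\omega)^{*}$, and the identity $\langle P\psi,\psi\rangle_{L^{2}}=\|(D_gS_\omega)^{*}\psi\|_{L^{2}}^{2}$ then forces $P$ itself to be injective. A formally self-adjoint, injective, elliptic operator on a closed manifold is an isomorphism between the relevant Sobolev spaces (by Fredholm theory), so $R:=(D_gS_\omega)^{*}\circ P^{-1}$ is a bounded right inverse of $D_gS_\omega$.

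With $R$ in hand, define $\Phi:L_4^p(M)\to L^p(M)$ by $\Phi(\psi):=S_\omega\bigl(\mathcal{E}_g((D_gS_\omega)^{*}\psi)\bigr)$. Then $\Phi$ is $C^{1}$ with $D_0\Phi=P$, which is an isomorphism, so the Banach-space inverse function theorem yields $\epsilon>0$ such that every $f\in L^{p}(M)$ satisfying $\|f-S_\omega(g)\|_{L^p}<\epsilon$ can be written as $\Phi(\psi)=f$ for a unique small $\psi\in L_4^p(M)$. Setting $\tilde g:=\mathcal{E}_g((D_gS_\omega)^{*}\psi)\in\Omega_\omega$ then gives $S_\omega(\tilde g)=f$. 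For the regularity claim, the equation $\Phi(\psi)=f$ is quasilinear fourth-order elliptic with smooth coefficients and linearisation $P$ at $0$; standard $L^{p}$-elliptic regularity together with the Sobolev multiplication theorem (applicable because $p>2n$) bootstraps $\psi\in L_4^p$ up to $\psi\in C^{\infty}$ whenever $f\in C^{\infty}$, and consequently $\tilde g$ is smooth.

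The main technical point of the plan is the ellipticity of $P$ indicated above: one must verify that the $J$-anti-invariant projection $(\,\cdot\,)^{-}$ does not annihilate the leading Hessian symbol, which comes down to the elementary identity $g(\xi,J\xi)=0$. Once this ellipticity and the attendant isomorphism property of $P$ are secured, every remaining step is standard Banach-space calculus and elliptic bootstrap.
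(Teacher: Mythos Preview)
The paper does not supply its own proof of this lemma; it is imported from the cited references \cite{KS} and \cite[Lemma~1]{kim}. Your argument is the standard $LL^{*}$ submersion scheme of Kazdan--Warner type and is correct: the ellipticity of $P=D_gS_\omega\circ(D_gS_\omega)^{*}$ that you obtain from the symbol identity $g(\xi^{\sharp},J\xi^{\sharp})=\omega(\xi^{\sharp},\xi^{\sharp})=0$ is exactly what the paper records separately in the lemma asserting that the principal part of $P$ coincides with that of $\tfrac{1}{2}\Delta_g^{2}$, and the remaining steps (Fredholm isomorphism of $P$, inverse function theorem applied to $\Phi$, and the quasilinear elliptic bootstrap using $p>2n$) are routine.
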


Recall that a diffeomorphism  $\phi$ is said to be isotopic to the identity map
if there is a homotopy of diffeomorphisms $\phi_t$, $0 \leq t \leq 1$, such that
$\phi_0 = id$ and $\phi_1= \phi$.
The following lemma was proved in \cite[Theorem 2.1]{KW1} without the isotopy clause. Here we add a few arguments in their argument to verify the isotopy part.

\begin{lem}
Suppose  $\dim M\geq 2$ and $f\in C^0(M)$. Then an $L^p$ function $f_1$ on $M$ belongs to the $L^p$ closure of

$\{f\circ \phi \ | \ \phi \textrm{ is a diffeomorphism of }M,  \textrm{ isotopic to the identity map}\}$

\noindent if and only if $ \ \inf f\leq f_1\leq \sup f$ almost everywhere.
\end{lem}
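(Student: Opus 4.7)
The direction $(\Rightarrow)$ is immediate: if $f \circ \phi_n \to f_1$ in $L^p$, then a subsequence converges pointwise almost everywhere; since each $f \circ \phi_n$ takes values in $[\inf f,\sup f]$, so does $f_1$ a.e.

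\textbf{Sufficiency -- overall plan.} For $(\Leftarrow)$ the plan is to follow the rearrangement strategy of \cite{KW1}, adding enough control to guarantee that every diffeomorphism produced lies in the identity component of $\mathrm{Diff}(M)$. Fix $\varepsilon > 0$; the goal is to construct a single diffeomorphism $\phi$, isotopic to the identity, with $\|f\circ\phi - f_1\|_{L^p} < C\varepsilon$.

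\textbf{Step 1 (reduction to simple functions with values in the range of $f$).} Use density of simple functions to approximate $f_1$ in $L^p$ by $g = \sum_{i=1}^N c_i \chi_{A_i}$, where $\{A_i\}$ is a measurable partition of $M$ and each $c_i$ lies in $[\inf f,\sup f]$. By continuity of $f$, pick $p_i \in M$ with $|f(p_i) - c_i| < \varepsilon$ together with an open neighborhood $U_i \ni p_i$ on which $|f - c_i| < 2\varepsilon$. Using inner/outer regularity of Lebesgue measure, replace each $A_i$, up to measure $\varepsilon/N$, by a finite disjoint union of small open sets with smooth boundary contained in coordinate charts.

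\textbf{Step 2 (volume-preserving rearrangement in the identity component).} Choose pairwise disjoint open sets $V_i \subseteq U_i$ with $|V_i| = |A_i|$. The key claim is that there exists a diffeomorphism $\phi$ of $M$, isotopic to the identity, such that $\phi(A_i)\, \triangle\, V_i$ has measure at most $\varepsilon/N$ for each $i$. This will be obtained by a Moser-type construction: on a connected manifold of dimension $\geq 2$ one can produce $\phi$ as the time-$1$ map of the flow of a (possibly time-dependent) smooth vector field that interpolates between the two volume-equal partitions in coordinate charts. Equivalently, one assembles $\phi$ as a finite composition of diffeomorphisms each supported inside a single coordinate chart, each realized as the time-$1$ map of a smooth flow, hence isotopic to the identity. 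Composing the associated isotopies gives an isotopy from $\mathrm{id}_M$ to $\phi$.

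\textbf{Step 3 (estimate and conclusion).} On the set $A_i \cap \phi^{-1}(U_i)$, whose complement in $A_i$ has measure $< \varepsilon/N$, one has $|f\circ\phi - c_i| < 2\varepsilon$, so
\[
\|f\circ\phi - g\|_{L^p}^p \leq \sum_i \bigl( (2\varepsilon)^p |A_i| + (\sup f - \inf f)^p \cdot \tfrac{\varepsilon}{N}\bigr) \leq C\,\varepsilon.
\]
Combined with $\|g - f_1\|_{L^p} < \varepsilon$ from Step 1, this gives $\|f\circ\phi - f_1\|_{L^p} < C'\varepsilon$. Letting $\varepsilon \to 0$ produces the required sequence of diffeomorphisms.

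\textbf{Main obstacle.} The substantive work is Step 2: producing a \emph{single} diffeomorphism that nearly matches the prescribed volume partition $\{A_i\}$ with the target partition $\{V_i\}$, while keeping it in the identity component. The isotopy clause, which is the new content beyond \cite[Theorem 2.1]{KW1}, is maintained precisely because each local rearrangement is realized as the flow of a smooth vector field and the dimensional hypothesis $\dim M \geq 2$ allows one to freely move mass around within and across coordinate charts by such flows. The case $\dim M = 1$ is genuinely exceptional (the isotopy class of a diffeomorphism of $S^1$ depends on degree) and is correctly excluded.
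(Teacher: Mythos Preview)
Your necessity argument is fine, and the broad sufficiency strategy---approximate $f_1$, then rearrange by a diffeomorphism assembled from compactly supported flows---is the right shape. But Step~2 as written contains a genuine gap.

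You ask for pairwise disjoint $V_i \subseteq U_i$ with $|V_i| = |A_i|$. This is in general impossible: the $A_i$ partition $M$, so $\sum_i |A_i| = |M|$, while each $U_i$ is trapped inside $\{\,|f - c_i| < 2\varepsilon\,\}$, which may have arbitrarily small measure. Concretely, take $M = T^2$, $f(x,y) = \sin(2\pi x)$, and $f_1 \equiv 1 = \sup f$. Then $N=1$, $A_1 = M$ has full measure, but $U_1$ is a strip of width $O(\sqrt{\varepsilon})$; no $V_1 \subseteq U_1$ with $|V_1| = |A_1|$ exists. The same example shows that the ``Moser-type'' label is misplaced: Moser's argument manufactures volume-preserving isotopies, whereas the diffeomorphism you actually need must crush most of $M$ into a set of tiny measure and is therefore violently non-volume-preserving. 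Your Step~3 estimate only requires $|A_i \setminus \phi^{-1}(U_i)| < \varepsilon/N$, not $\phi(A_i) \approx V_i$ for equal-measure $V_i$; once you drop the volume-matching and the Moser language, you are left with the real task, which is a squeezing construction, not a rearrangement of mass.

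The paper handles this differently. It replaces $f_1$ by a continuous approximant, triangulates $M$ so finely that $f_1$ oscillates by at most $\delta$ on each simplex $\Delta_i$, and builds $\Phi_1 = \phi_1 \circ \psi_1$ from two explicit isotopies: $\psi_t$ fixes a thin neighborhood $\Omega$ of the $(n-1)$-skeleton and squeezes the rest of each $\Delta_i$ into a tiny ball $U_i$ about an interior point $b_i$; then $\phi_t$ carries each $U_i$ into a region $V_i$ where $|f - f_1(b_i)| < \delta$. The squeezing map $\psi_1$ is the place where volume is destroyed, and both $\psi_t$ and $\phi_t$ are visibly flows, hence isotopic to the identity. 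The error on $\Omega$ is controlled just by making $\Omega$ small. If you replace your Step~2 with this squeeze-then-transport construction, your Step~3 estimate goes through essentially unchanged.
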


\begin{proof}
Let $\varepsilon >0$ be given. As $C^0(M)$ is dense in $L^p(M)$, we may assume $f_1 \in C^0(M)$.
We triangulate $M$ into  n-simplexes $\Delta_i$ so that $M= \cup \Delta_i$ and that
$\max_{x,y \in \Delta_i} |f_1(x) - f_1(y)| < \delta$
with $2 \delta = \frac{\varepsilon}{(2 {\rm vol}M)^{ 1 \over p}}$. Choose $b_i$ in the interior of $\Delta_i$.

There exist disjoint open balls $V_i \subset M$ such that $| f(y) - f_1(b_i)    | < \delta$ for $y \in V_i$ and for each $i$.
One chooses a neighborhood $\Omega$ of the $(n - 1)$-
skeleton $M^{(n-1)}$ of $M$, disjoint from the $b_i$, so small that
$$ (\max_M |f| + \max_M |f_1|)^p {\rm Vol(\Omega)} < \frac{\varepsilon^p}{2} .$$

For each $i$, let $U_i$ be a small ball neighborhood of $b_i$, disjoint from $\Omega$, and choose open
sets $O_1$ and $O_2$, so that
$$M - \Omega \subset O_1 \subset \bar{O_1} \subset O_2 \subset \bar{O_2} \subset M- M^{(n-1)}.$$

There is a homotopy of diffeomorphisms $\phi_t$, $0 \leq t \leq 1$, such that
$\phi_0 = id$ and $\phi_1 (U_i) \subset  V_i$, for each $i$.
And there is a homotopy of  diffeomorphisms $\psi_t$, $0 \leq t \leq 1$, with
$\psi_0 = id$ such that
$\psi_1$  satisfies $ \psi_1 | M - O_2  = id$  and that $\psi_1 ( O_1 \cap \Delta_i ) \subset U_i$, for each
$i$. Let $\Phi_t = \phi_t \circ \psi_t$. Then, we get
\begin{align*}
  \parallel f \circ \Phi_1 - f_1 \parallel_p^p  &=
(\int_{\Omega}   +  \int_{M- \Omega}) (|f \circ  \Phi_1 - f_1|^p dvol) \\
& < \frac{\varepsilon^p}{2}  + \sum_i \int_{O_1 \cap \Delta_i}   | f \circ  \Phi_1(y) -f_1(b_i) + f_1(b_i)  - f_1(y)  |^p dvol \\
&<\frac{\varepsilon^p}{2} +  \sum_i  2^{p} \delta^p Vol(\Delta_i) = \varepsilon^p.
\end{align*}
This proves if part, and only if part should be clear.
\end{proof}

\begin{prop}\label{mainprop}
If $D_gS_\omega$ is surjective for a smooth $g \in \Omega_{\omega}$, then any smooth function $f$ with $\inf f\leq S_\omega(g)\leq \sup f$ is the scalar curvature of a smooth almost-K\"ahler metric compatible with ${\phi}^*\omega$ for a diffeomorphism ${\phi}$ of $M$ isotopic to the identity.
\end{prop}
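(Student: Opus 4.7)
The plan is to chain the two preceding lemmas. The density lemma lets us $L^p$-approximate $S_\omega(g)$ by functions of the form $f\circ\phi^{-1}$ with $\phi$ isotopic to the identity, because the hypothesis $\inf f\le S_\omega(g)\le \sup f$ places $S_\omega(g)$ in the closure described by that lemma. The local surjectivity lemma then produces an almost-K\"ahler metric whose scalar curvature is exactly this approximation, and a pull-back by $\phi$ makes the diffeomorphism cancel, leaving $f$ itself as the scalar curvature.

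More precisely, let $\epsilon>0$ be the constant furnished by the local surjectivity lemma at $g$. Since the inverse of a diffeomorphism isotopic to the identity is again isotopic to the identity, the density lemma yields a diffeomorphism $\phi$ of $M$, isotopic to the identity, with
\begin{equation*}
\|f\circ \phi^{-1}-S_\omega(g)\|_{L^p}<\epsilon.
\end{equation*}
By the local surjectivity lemma there is then a smooth almost-K\"ahler metric $\tilde g\in\Omega_\omega$ with $S_\omega(\tilde g)=f\circ\phi^{-1}$; smoothness follows from the final clause of that lemma, since $f\circ\phi^{-1}$ is smooth. Set $\bar g:=\phi^*\tilde g$. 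Pulling back the compatibility relation $\tilde g(\cdot,\cdot)=\omega(\cdot,J\cdot)$ yields $\bar g(\cdot,\cdot)=(\phi^*\omega)(\cdot,(\phi^*J)\cdot)$, so $\bar g$ is an almost-K\"ahler metric compatible with $\phi^*\omega$. By the naturality of scalar curvature under diffeomorphisms,
\begin{equation*}
s_{\bar g}(x)=s_{\tilde g}(\phi(x))=(f\circ\phi^{-1})(\phi(x))=f(x)
\end{equation*}
for every $x\in M$, which is exactly the conclusion.

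The argument is a clean assembly once the two lemmas are in place; the one subtlety is to use $\phi^{-1}$ rather than $\phi$ in the approximation step, so that the naturality pull-back in the last step produces $f$ and not $f\circ\phi^2$. The genuine difficulty has been pushed upstream: the proposition reduces the Kazdan--Warner problem, for a prescribed target $f$, to the construction of a smooth reference metric $g\in\Omega_\omega$ at which $D_gS_\omega$ is surjective and whose scalar curvature is everywhere trapped in $[\inf f,\sup f]$. Producing such a $g$ for the functions considered in Theorem \ref{maint} is where I expect the real obstacle to lie, and is presumably what Sections 5 and 6 are devoted to.
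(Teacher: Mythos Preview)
Your proof is correct and follows essentially the same route as the paper: approximate $S_\omega(g)$ in $L^p$ by $f\circ\tilde\phi$ for some $\tilde\phi$ isotopic to the identity, invoke local surjectivity to realize $f\circ\tilde\phi$ as $S_\omega(\tilde g)$, and then pull back by $\tilde\phi^{-1}$ to obtain $f$ as the scalar curvature of a metric compatible with $(\tilde\phi^{-1})^*\omega$. Your explicit handling of the $\phi$ versus $\phi^{-1}$ bookkeeping and the closing remarks about where the genuine difficulty lies are accurate and a nice addition.
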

\begin{proof}
By the above two lemmas, there exists a diffeomorphism $\tilde{\phi}$ isotopic to the identity such that $f\circ\tilde{\phi}=S_\omega(\tilde{g})$ for a smooth $\omega$-almost-K\"ahler metric $\tilde{g}$. Thus  $f=S_{(\tilde{\phi}^{-1})^*\omega}((\tilde{\phi}^{-1})^*\tilde{g})$.
\end{proof}

\begin{lem}\label{kim-idea}
The principal part of the fourth-order linear partial differential operator $(D_gS_\omega)\circ(D_gS_\omega)^*: C^\infty(M)\rightarrow C^\infty(M)$ is equal to that of $\frac{1}{2}\Delta_g\circ \Delta_g$, where $\Delta_g$ is the $g$-Laplacian.
\end{lem}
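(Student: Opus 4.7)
The plan is to reduce the identity to one about principal symbols and exploit the pointwise fact that $J$ is $g$-antisymmetric.

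Since the curvature pieces $g(r_g,h)$ and $r_g^-\psi$ are of order zero in their respective arguments, they do not contribute to the principal symbol. Thus in a cotangent direction $\xi$, the second-order symbol of $D_gS_\omega$ is the double contraction $h\mapsto \xi^i\xi^j h_{ij}$, read off from $\delta_g\delta_g h$, and the second-order symbol of $(D_gS_\omega)^*$ is the $J$-anti-invariant projection of the symmetric tensor $\psi\,\xi\otimes\xi$ coming from $(\nabla d\psi)^-$:
\[
\psi\longmapsto \tfrac{1}{2}\bigl(\xi_i\xi_j - J_i^{\,k}J_j^{\,l}\xi_k\xi_l\bigr)\psi.
\]
Multiplying these two symbols, the fourth-order principal symbol of the composition is
\[
\sigma_4\bigl((D_gS_\omega)\circ(D_gS_\omega)^*\bigr)(\xi)\psi \;=\; \tfrac{1}{2}\bigl(|\xi|^4 - \bigl(J^k_{\,i}\xi^i\xi_k\bigr)^2\bigr)\psi.
\]

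The compatibility of $J$ with $g$ is equivalent to saying that $J_{ik}:=g_{ij}J^j_{\,k}$ is antisymmetric, i.e.\ $g(X,JX)\equiv 0$. Applied to $X=\xi^{\sharp}$ this forces $J^k_{\,i}\xi^i\xi_k=0$, so the symbol collapses to $\tfrac{1}{2}|\xi|^4\psi$. On the other hand, $\tfrac{1}{2}\Delta_g\circ\Delta_g$ has principal symbol $\tfrac{1}{2}|\xi|^4$. Therefore the two fourth-order operators agree to top order.

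The only bookkeeping issue is keeping the sign conventions for $\delta_g$ and the Hessian consistent with the formulas for $D_gS_\omega$ and its adjoint as used in \cite{KS}; any such sign choices appear paired in the two factors and cancel under composition, so the only surviving numerical coefficient is the $\tfrac{1}{2}$ produced by the projector $(\cdot)^-$. I anticipate no substantive obstacle beyond this routine check, since all the geometric content of the lemma is concentrated in the single pointwise identity $g(X,JX)=0$.
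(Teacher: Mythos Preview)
Your argument is correct. By passing to principal symbols you reduce the whole lemma to the single pointwise identity $g(\xi^\sharp,J\xi^\sharp)=0$, which kills the cross term $(J^k_{\ i}\xi^i\xi_k)^2$ and leaves exactly $\tfrac12|\xi|^4$.

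The paper proceeds differently: it fixes a $J$-adapted local orthonormal frame $e_1,Je_1,\dots,e_n,Je_n$ and expands $\delta_g\delta_g(2\nabla d\psi)^-$ directly as a sum of fourth-order derivatives. After commuting derivatives modulo lower-order terms, the $J$-piece becomes $\sum_i(e_{2i-1}Je_{2i-1}+e_{2i}Je_{2i})(\cdot)=\sum_i(e_{2i-1}e_{2i}-e_{2i}e_{2i-1})(\cdot)$, a commutator, hence lower order; what remains is $\Delta_g^2\psi$. Your symbol computation and the paper's frame computation encode the same cancellation: your vanishing of $J^k_{\ i}\xi^i\xi_k$ is the symbol-level avatar of the paper's observation that the top-order $J$-contribution is a commutator. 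Your route is more invariant and quicker, while the paper's frame calculation is more explicit and makes the $\tfrac12$ coming from the projector $(\cdot)^-$ visible without any bookkeeping about sign conventions for $\delta_g$.
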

\begin{proof}
Let $e_1,Je_1,\cdots,e_n,Je_n$ be a local orthonormal frame satisfying
$Je_{2i-1}=e_{2i}$ for $i=1,\cdots,n$.
The fourth order differentiation only occurs at $\delta_g\delta_g(\nabla d\psi)^-$. A direct computation shows that
\begin{eqnarray*}
\delta_g\delta_g(2\nabla d\psi)^-(\psi)&=&\sum_{i=1}^{2n}\sum_{j=1}^{2n}e_i(e_j(\nabla d\psi(e_i,e_j)-\nabla d\psi(J(e_i),J(e_j))))+\textrm{l.o.t.}\\ &=& \sum_{i=1}^{2n}\sum_{j=1}^{2n}\{e_i(e_j(e_i(e_j\psi)))-e_i(e_j(Je_i(Je_j\psi)))\}+\textrm{l.o.t.}\\ &=& \sum_{i=1}^{2n}\sum_{j=1}^{2n}\{e_i(e_i(e_j(e_j\psi)))-e_i(Je_i(e_j(Je_j\psi)))\}+\textrm{l.o.t.}\\ &=& \Delta^2\psi-\{\sum_{i=1}^{n}(e_{2i-1}Je_{2i-1}+e_{2i}Je_{2i})(\sum_{j=1}^{2n}e_j(Je_j\psi))\}+\textrm{l.o.t.}\\ &=&
\Delta_g^2\psi-\{\sum_{i=1}^{n}(e_{2i-1}e_{2i}-e_{2i}e_{2i-1})(\sum_{j=1}^{2n}e_j(Je_j\psi))\}+\textrm{l.o.t.}\\ &=& \Delta_g^2\psi+\textrm{l.o.t.},
\end{eqnarray*}
where l.o.t denotes the terms of differentiations up to the 3rd order and $\Delta$ is the Euclidean Laplacian.
\end{proof}

\section{Almost-\k Surgery and deformation}


Here we consider a left-invariant almost-\k metric on the 4-dimensional Kodaira-Thurston nil-manifold \cite{Ab}.
The metric can be
written on ${\Bbb R}^4 = \{ (x,y,z,t) | x,y,z,t \in {\Bbb R} \}$
as $$g_{KT} = dx^2 + dy^2 + (dz - x dy)^2 + dt^2$$ and the left-invariant
symplectic form is $\omega_{KT} = dt\wedge dx + dy \wedge dz$.
 The
almost complex structure $J$ is then given by $J(e_4)= e_1$,
$J(e_1)= -e_4$, $J(e_2)= e_3$, $J(e_3)= -e_2$, where
$e_1 =
\frac{\partial}{\partial x}$, $e_2 = \frac{\partial}{\partial y} +
x \frac{\partial}{\partial z}$,
 $e_3 = \frac{\partial}{\partial z}$,
 $e_4 = \frac{\partial}{\partial t}$ which form an orthonormal frame for the metric.

Now we consider a metric $g_n$ on the product manifold ${\Bbb R}^4 \times {\Bbb R}^{2n-4}$, $n \geq 2$ defined
by $g_n = g_{KT} + g_{Euc}$, where $g_{Euc}$ is the Euclidean metric on  ${\Bbb R}^{2n-4}$.
This manifold has the symplectic form $\tilde{\omega} = \omega_{KT} + \omega_0$, where $\omega_0$ is the standard symplectic
structure on ${\Bbb R}^{2n-4}$ and $g_n$ is $\tilde{\omega}$-almost-\k.

\bigskip

Given any symplectic manifold $(M, \omega)$ with an almost-\k metric $g$ and the corresponding almost complex structure $J_g$, we pick a point $p$.
There exists a Darboux coordinate neighborhood $(U, x_i)$ of $p$ with $x(p) =0$ so that
$\omega = \sum_{i=1}^{n} dx_{2i-1} \wedge dx_{2i}$. Assume that $U$ contains a ball $B^g_{\delta}(p)$ of $g$-radius $\delta$ with center at $p$. By considering the (coordinates-rearranging) local diffeomorphism
$\phi(x_1,  \cdots , x_{2n})= (x_2,x_3,x_4,x_1, x_{5} \cdots, x_{2n})$, $\phi:U \rightarrow  {\Bbb R}^4 \times {\Bbb R}^{2n-4}$, i.e. identifying $x_2=x, \ x_3=y, \ x_4= z, \ x_1=t \cdots $, we can get the pulled-back metric of $g_n$ via $\phi$, which we still denote by $g_n$. Note that $\phi^* \tilde{\omega} = \omega$.
We may express $g_n =  g  \cdot e^{h}$ on
$U $ for a unique smooth symmetric
$J_g$-anti-invariant tensor $h$ from (\ref{geh}),  because $ g$ and
$g_n$ are both $\omega$-almost-k\"ahler.
Let
 $\eta (r)$  be a
smooth cutoff function in $C^{\infty}(\Bbb{R}^{>0}, [0, 1])$ s.t.
$\eta \equiv 0$ for $ r<  \frac{\delta}{3} $ and $\eta \equiv 1$ on the set $ r
\geq \frac{2\delta}{3}$.

We define a new $\omega$-almost-\k metric $h$ on $M$ by

\begin{equation} \label{cutoff}
h :=
\begin{cases}
& g \hspace{0.9in} \rm{on }  \ \ \ \ \
M \setminus B^g_{\frac{2\delta}{3}}(p), \\
&g_n  \cdot e^{\eta(r_{g})h}   \hspace{0.3in} \rm{on}  \ \ \ \ \   B^g_{\frac{2\delta}{3}}(p) \setminus B^g_{\frac{\delta}{3}}(p) \\
&  g_n   \hspace{0.8in}  \rm{on} \ \ \ \  B^g_{\frac{\delta}{3}}(p)
\end{cases}
\end{equation}
where $r_{g}$ is the distance function of $g$ from $p$.

\bigskip
In \cite{Ki2}, using a Lohkamp type argument \cite{Lo1} adapted to symplectic manifolds,
 the first author proved on any closed symplectic manifold of dimension$\geq 4$ the existence of an almost-\k metric of negative scalar curvature which equals a prescribed negative scalar-curved metric on an open subset;


\begin{prop} \cite[Theorem 3]{Ki2} \label{pp1}
Let $S$ be a closed  subset in a smooth closed symplectic manifold $(M, \omega)$ of dimension $2n\geq 4$, and $U \supset  S$ be an open neigborhood. Then for any smooth $\omega$-almost-\k metric $g_0$ on $U$ with $s(g_0) <0$,  there exists a smooth $\omega$-almost-\k metric $g$ on $M$ such that $g= g_0$ on $S$ and $s(g) <0$ on $M$.
\end{prop}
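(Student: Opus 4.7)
The plan is a Lohkamp-style argument tailored to the almost-K\"{a}hler category, with the Kodaira--Thurston local model $g_n$ of Section 5 serving as the negative-scalar-curvature building block that can be spliced into any Darboux chart via the formula (\ref{cutoff}). Since $g_0$ is prescribed only on $U$ and must be preserved on $S$, the task separates into two parts: first extend $g_0$ to some global $\omega$-almost-K\"{a}hler metric on $M$ (ignoring scalar curvature outside $S$), then surgically insert Kodaira--Thurston bumps at finitely many points of $M \setminus S$ so that the scalar curvature becomes negative everywhere.

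For the extension, fix a closed neighborhood $S'$ of $S$ with $S \subset S' \subset U$, pick any $\omega$-compatible almost complex structure $J_1$ on $M$ (which exists by contractibility of the space of compatible $J$'s) with induced metric $g_1$, and on $U$ write $g_1 = g_0 \cdot e^h$ via (\ref{geh}) for the unique $J_{g_0}$-antiinvariant $h$; replace $h$ by $\rho h$ for a cutoff $\rho$ that vanishes on $S'$ and equals $1$ outside a slightly larger set. Gluing $g_0 \cdot e^{\rho h}$ on $U$ with $g_1$ on $M \setminus U$ yields a global metric $\tilde g \in \Omega_\omega$ with $\tilde g = g_0$ on $S'$. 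Next, cover the compact set $K := \overline{M \setminus S'}$ by finitely many Darboux balls $B_1,\dots,B_N$ whose closures are disjoint from $S$. At stage $j$, with current metric $g^{(j-1)}$ already equal to $g_0$ on $S$, write $g^{(j-1)}$ in Darboux coordinates centered at $p_j \in B_j$ (after a linear symplectic adjustment so that $g^{(j-1)}(p_j)$ agrees with the Kodaira--Thurston model at the origin) and apply the splicing (\ref{cutoff}) to insert a copy of $g_n$ on $B^{g^{(j-1)}}_{\delta_j/3}(p_j)$. Inside the inner core the metric is then exactly $g_n$, with scalar curvature $-1/2$; outside $B^{g^{(j-1)}}_{2\delta_j/3}(p_j)$ nothing changes, so any previously achieved negativity is preserved.

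The hard step is controlling the scalar curvature on the transition annulus $B^{g^{(j-1)}}_{2\delta_j/3}(p_j) \setminus B^{g^{(j-1)}}_{\delta_j/3}(p_j)$, where the metric is $g_n \cdot e^{\eta(r) h_j}$ for $h_j$ the $J_{g_n}$-antiinvariant tensor relating $g_n$ and $g^{(j-1)}$. A direct expansion yields
\[
s = s(g_n) + P(\eta, d\eta, \nabla^2 \eta;\, h_j, \nabla h_j, \nabla^2 h_j),
\]
with $P$ a rational expression in its arguments. Since $h_j(p_j) = 0$, one has $h_j = O(r)$; however, terms such as $(\nabla^2 \eta) \cdot h_j$ are of size $O(\delta_j^{-2}) \cdot O(\delta_j) = O(\delta_j^{-1})$ and do not automatically shrink with $\delta_j$. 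Overcoming this requires arranging, via a further symplectic modification of the Darboux coordinates at $p_j$, that the $1$-jets of $g^{(j-1)}$ and $g_n$ also match, forcing $h_j = O(r^2)$ and rendering all terms of $P$ uniformly bounded on the annulus. Then $|P| \leq C$ independently of $\delta_j$, and shrinking $\delta_j$ (or further deforming $g_n$ within the $\omega$-compatible class) drives $|P|$ below $|s(g_n)| = 1/2$, giving $s < 0$ on the annulus. Iterating the surgery over the $N$ balls produces the desired $g$. This annulus calibration is the main obstacle; once it is in place, the rest is a standard patching.
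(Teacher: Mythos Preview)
The paper does not supply its own proof of this proposition: it is quoted verbatim from \cite{Ki2} with only the one-line attribution ``using a Lohkamp type argument \cite{Lo1} adapted to symplectic manifolds.'' So there is no in-paper argument to compare against; one can only ask whether your sketch is a plausible reconstruction of such a Lohkamp-type proof.

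Your outline has the right architecture (extend $g_0$, then do almost-K\"ahler surgery in Darboux charts away from $S$), but two genuine gaps remain. First, the covering step does not do what you need. After your $N$ surgeries the metric equals $g_n$ only on the tiny cores $B_{\delta_j/3}(p_j)$ and equals the uncontrolled extension $\tilde g$ on all of $K\setminus\bigcup_j B_{2\delta_j/3}(p_j)$; since you later insist on shrinking each $\delta_j$, these cores cannot cover $K$, and most of $K$ is left with whatever sign $s(\tilde g)$ happens to have. If instead you let the surgery balls overlap so that their cores do cover $K$, the sequential surgeries interfere (surgery $j$ overwrites part of what surgery $j-1$ achieved), and ``previously achieved negativity is preserved'' is no longer true. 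Second, your annulus estimate does not close even on its own terms: with $h_j=O(r^2)$ one gets $(\nabla^2\eta)\!\cdot\! h_j=O(\delta^{-2})\cdot O(\delta^2)=O(1)$, so $|P|\le C$ with $C$ independent of $\delta_j$ but \emph{not small}; shrinking $\delta_j$ therefore cannot force $|P|<\tfrac12$. (The claim that a further symplectic coordinate change matches $1$-jets is also left unjustified.)

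What is missing is an extra parameter that drives the model's scalar curvature to $-\infty$ while keeping it $\omega$-compatible and standard at the origin. For instance the family $g_\lambda = dx^2+dy^2+(dz-\lambda x\,dy)^2+dt^2$ is compatible with the \emph{same} $\omega_{KT}$ for every $\lambda$, agrees with the Euclidean metric at $x=0$, and has $s(g_\lambda)=-\lambda^2/2$. With such a parameter one can dominate the $O(1)$ annulus error by choosing $\lambda$ large, and an honest Lohkamp-type iteration becomes feasible. Your sketch never introduces this degree of freedom, and without it neither the annulus control nor the global covering can be completed.
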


We now apply proposition \ref{pp1} to the metric $h$ of (\ref{cutoff}) with $U=B^g_{\frac{\delta}{3}}(p)$ and $S= B^g_{\frac{\delta}{6}}(p)$. We get;

\begin{cor}\label{kim-corollary}
On any smooth closed symplectic manifold $(M, \omega)$ of dimension $2n\geq 4$, there exists a smooth $\omega$-almost-\k metric $g$ with negative scalar curvature such that $g$ is isometric to the product metric of $g_{KH}$ and the Euclidean metric on an open subset $B_\epsilon$ of $M$.
\end{cor}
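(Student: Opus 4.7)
The plan is to apply Proposition \ref{pp1} to a carefully transplanted local model built from the Kodaira--Thurston example, which is already described in the passage leading up to (\ref{cutoff}). Start with any smooth $\omega$-almost-K\"ahler metric $g$ on $(M,\omega)$ and choose a point $p\in M$. By Darboux's theorem, pick coordinates $(x_1,\dots,x_{2n})$ centered at $p$ in which $\omega = \sum_{i} dx_{2i-1}\wedge dx_{2i}$, and choose $\delta>0$ small enough that $B^g_\delta(p)$ lies in the chart. Using the coordinate-rearranging diffeomorphism $\phi$ already introduced, identify this Darboux neighborhood with an open set in $\mathbb{R}^4\times\mathbb{R}^{2n-4}$ carrying the product symplectic form $\tilde\omega = \omega_{KT}+\omega_0$, and pull back the product metric $g_n = g_{KT}+g_{Euc}$ via $\phi$. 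Since $\phi^*\tilde\omega = \omega$, the resulting metric (still denoted $g_n$) is a smooth $\omega$-almost-K\"ahler metric on the Darboux neighborhood.

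Next, observe that the scalar curvature of $g_n$ on this neighborhood is the constant $-\tfrac12$: the Kodaira--Thurston factor contributes $-\tfrac12$ by the curvature computation invoked in Lemma \ref{Rhu-win}, and the flat Euclidean factor contributes $0$. Set $U := B^g_{\delta/3}(p)$ and take $g_0 := g_n|_U$, which is smooth, $\omega$-almost-K\"ahler, and has negative scalar curvature on $U$. (Equivalently, one can build the global cutoff metric $h$ of (\ref{cutoff}) and note that on $B^g_{\delta/3}(p)$ the cutoff $\eta$ vanishes, so $h|_U = g_n|_U$, which is the viewpoint taken immediately before the statement.)

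Now apply Proposition \ref{pp1} with this $U$, the closed subset $S := \overline{B^g_{\delta/6}(p)} \subset U$, and initial metric $g_0$ as above. The conclusion produces a smooth $\omega$-almost-K\"ahler metric $\tilde g$ on $M$ satisfying $s(\tilde g)<0$ everywhere and $\tilde g = g_0 = g_n$ on $S$. Setting $B_\epsilon$ to be the interior of $S$, the restriction $\tilde g|_{B_\epsilon}$ is isometric, via $\phi^{-1}$, to the product of $g_{KT}$ and the Euclidean metric on an open subset of $\mathbb{R}^4\times\mathbb{R}^{2n-4}$, which is the conclusion of the corollary.

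There is no substantive obstacle: all the work is hidden in Proposition \ref{pp1}, whose Lohkamp-type argument was established in \cite{Ki2}. The only things that require verification are the two routine points already in place, namely that the transplanted product model $g_n$ is genuinely $\omega$-almost-K\"ahler on the Darboux chart (which follows from $\phi^*\tilde\omega = \omega$) and that its scalar curvature equals the negative constant $-\tfrac12$ (which is the standard Kodaira--Thurston computation already used earlier in the paper).
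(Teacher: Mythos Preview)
Your proof is correct and follows the same approach as the paper. The paper's argument is a single sentence: apply Proposition~\ref{pp1} with $U=B^g_{\delta/3}(p)$, $S=B^g_{\delta/6}(p)$, and $g_0$ the restriction to $U$ of the metric $h$ of (\ref{cutoff}) (which coincides with $g_n$ there); you make the same application, merely spelling out that $g_n$ is $\omega$-almost-K\"ahler with scalar curvature $-\tfrac12$.
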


\section{Proof of Theorem \ref{maint}}
\begin{thm} \label{maina}
Let $(M,\omega)$ be a smooth closed symplectic manifold of dimension $2n\geq 4$. Then any smooth function on $M$  which is somewhere negative and somewhere zero is the scalar curvature of some smooth almost-K\"ahler metric associated to $c\varphi^*\omega$, where $c>0$ is a constant and $\varphi$ is a diffeomorphism  of $M$, isotopic to the identity.
\end{thm}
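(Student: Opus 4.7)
The plan is to apply Proposition \ref{mainprop} to a carefully chosen almost-K\"ahler metric obtained after rescaling the symplectic form. First I would invoke Corollary \ref{kim-corollary} to produce a smooth $\omega$-almost-K\"ahler metric $g$ on $M$ with $s_g<0$ everywhere that is isometric, on some open ball $B_\epsilon\subset M$, to the product of the Kodaira-Thurston metric $g_{KT}$ and a flat Euclidean factor.

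Since $M$ is compact and $f$ is smooth, $\inf f$ is finite; by hypothesis $\inf f<0\leq\sup f$. I would then replace $\omega$ by $c\omega$ for a constant $c>0$, so that the corresponding almost-K\"ahler metric becomes $cg$ with scalar curvature $s_g/c$. For $c$ large enough one has $\min_M(s_g/c)\geq \inf f$, and then automatically $s_g/c<0\leq \sup f$, so the range condition $\inf f\leq S_{c\omega}(cg)\leq \sup f$ required in Proposition \ref{mainprop} is satisfied.

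The hard step is verifying that $D_{cg}S_{c\omega}$ is surjective. By Lemma \ref{kim-idea}, the composition $(D_{cg}S_{c\omega})(D_{cg}S_{c\omega})^*$ has elliptic principal part $\frac{1}{2}\Delta_{cg}^2$, so it suffices to show that $(D_{cg}S_{c\omega})^*$ is injective, i.e.\ to rule out a nonzero $\psi\in C^\infty(M)$ satisfying $(\nabla d\psi)^- = r_{cg}^-\psi$. Aronszajn unique continuation applied to the fourth-order elliptic composition reduces this to showing $\psi\equiv 0$ on the product ball $B_\epsilon$. There the product structure makes $r^-$ and $(\nabla d\psi)^-$ explicitly computable in the Kodaira-Thurston frame $e_1,\ldots,e_4$ together with the coordinate fields of the Euclidean factor; separating the Euclidean variables decouples the equation and leaves an overdetermined elliptic system on the Kodaira-Thurston nil-manifold whose only solution, as in the computation already carried out in \cite{kim}, is $\psi\equiv 0$.

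Once surjectivity is in hand, Proposition \ref{mainprop} directly yields a diffeomorphism $\varphi$ of $M$ isotopic to the identity and a smooth almost-K\"ahler metric compatible with $\varphi^*(c\omega)=c\varphi^*\omega$ whose scalar curvature equals $f$, proving the theorem. The main obstacle is precisely the injectivity check of $(D_{cg}S_{c\omega})^*$ on $B_\epsilon$: this is why Corollary \ref{kim-corollary} was arranged to produce a Kodaira-Thurston--Euclidean product model, so that the local analysis reduces to the nil-manifold calculation of \cite{kim} and the rest of the Kazdan-Warner-type scheme (Proposition \ref{mainprop} together with the scaling) can be applied mechanically.
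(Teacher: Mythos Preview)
Your overall strategy matches the paper's: produce the metric of Corollary~\ref{kim-corollary}, verify surjectivity of $D_gS_\omega$ via the local Kodaira--Thurston model, arrange the range inequality by a scaling, and conclude with Proposition~\ref{mainprop}. Your scaling (rescale $\omega$ first so that $s_g/c$ falls between $\inf f$ and $\sup f$) is equivalent to the paper's (apply Proposition~\ref{mainprop} to $cf$ and then rescale the resulting metric); both land on a metric compatible with $c\varphi^*\omega$.

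There is, however, one genuine gap. You write that the injectivity of $(D_gS_\omega)^*$ on $B_\epsilon$ ``reduces to the nil-manifold calculation of~\cite{kim}.'' It does not. The computation in \cite{kim} only shows that a \emph{global} solution of $(\nabla d\psi)^- = r^-\psi$ on the closed Kodaira--Thurston manifold must vanish; that argument uses compactness and is not available on the small ball $B_\epsilon$. What is needed here---and what the paper supplies in Lemma~\ref{Rhu-win}---is the stronger statement that any \emph{local} solution on the product ball vanishes. This requires a separate explicit analysis: writing out the six scalar equations coming from the $(i,j)$-components with $1\le i,j\le 4$ (the Euclidean coordinates $w$ enter only as parameters, not via any decoupling), one first uses $\psi_{xt}=0$ together with the ODE $b_{ttt}-\tfrac12 b_t=0$ and the remaining equations to force $\psi_t=0$, and then combines the $x$-differentiated form of~(\ref{equation3}) with~(\ref{equation4}) and~(\ref{equation5}) to get $\psi_x=0$, whence $\psi\equiv 0$ by~(\ref{equation1}). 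Only after this local vanishing does unique continuation for the bi-Laplace--type operator $(D_gS_\omega)(D_gS_\omega)^*$ propagate $\psi\equiv 0$ to all of $M$. So your outline is correct, but the step you defer to \cite{kim} is precisely the new work carried out in this paper.
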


\begin{proof}
On $(M,\omega)$, let's take the $\omega$-almost-\k metric $g$ constructed in Corollary \ref{kim-corollary}.
By Lemma \ref{Rhu-win} below, we have that $D_gS_\omega$ is surjective. If $f\in C^\infty(M)$ is somewhere negative and somewhere zero,
then for a sufficiently large constant $c>0$,  $$\inf {c}f\leq S_\omega(g)\leq \sup {c}f.$$ Then by Proposition \ref{mainprop},
$cf$ is the scalar curvature of an almost-K\"ahler metric $G$ compatible with $\phi^*\omega$ for some diffeomorphism $\phi$ of $M$ isotopic to the identity, and hence $f$ is the scalar curvature of the almost-K\"ahler metric $c \cdot G$ compatible with $c\phi^*\omega$.
\end{proof}
\noindent Theorem \ref{maint} follows from the above theorem \ref{maina}, since $c\phi^*\omega$ is deformation equivalent to $\omega$ for any constant $c>0$.
\begin{lem}\label{Rhu-win}
For the metric $g$ constructed in Corollary \ref{kim-corollary},  the kernel of  $ \ (D_gS_\omega)^*$
 is $\{ 0 \}$.
\end{lem}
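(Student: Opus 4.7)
My plan is to combine elliptic unique continuation with a direct analysis on the open set $B_\epsilon\subset M$ where $g$ has the product form $g_{KT}+g_{Euc}$ guaranteed by Corollary~\ref{kim-corollary}.

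Suppose $\psi\in\ker(D_gS_\omega)^*$. Composing on the left by $D_gS_\omega$, the function $\psi$ satisfies the fourth-order linear equation $(D_gS_\omega)(D_gS_\omega)^*\psi=0$, which by Lemma~\ref{kim-idea} has principal part $\tfrac{1}{2}\Delta_g^2$ and is therefore elliptic. Elliptic regularity gives $\psi\in C^\infty(M)$, and the classical Aronszajn-type unique continuation principle for elliptic operators with leading part $\Delta^2$ shows that $\psi$ vanishing on any nonempty open subset forces $\psi\equiv 0$ on the connected manifold $M$. Thus it suffices to prove $\psi|_{B_\epsilon}\equiv 0$.

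On $B_\epsilon$, both the metric and the almost complex structure split, $g=g_{KT}+g_{Euc}$ and $J=J_{KT}+J_{Euc}$, and one computes the Ricci tensor of $g_{KT}$ directly from $[e_1,e_2]=e_3$ and the Koszul formula: in the adapted orthonormal frame, $\mathrm{Ric}(g_{KT})$ is diagonal with entries $(-\tfrac{1}{2},-\tfrac{1}{2},\tfrac{1}{2},0)$, so $r_g^-$ vanishes along the Euclidean factor and across the factors, but is non-trivially non-zero along the KT factor (its diagonal components in the adapted frame are $(-\tfrac{1}{4},-\tfrac{1}{2},\tfrac{1}{2},\tfrac{1}{4})$). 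The equation $(\nabla d\psi)^-=r_g^-\psi$ then decomposes into three families of conditions: (i) $J$-anti-invariant Hessian constraints purely in Euclidean directions (homogeneous, since Euclidean Ricci vanishes), (ii) mixed $J$-anti-invariant Hessian constraints between KT and Euclidean directions (homogeneous), and (iii) inhomogeneous equations in the KT directions that couple second derivatives of $\psi$ to $\psi$ itself.

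The main obstacle is to show that (i)--(iii) force $\psi\equiv 0$ on $B_\epsilon$. The plan is: first, from (i) and the standard complex structure on the Euclidean factor, deduce that $\psi$ is pluriharmonic in the Euclidean variables; next, feed this into (ii) to constrain how $\psi$ may depend jointly on KT and Euclidean variables; finally, substitute the resulting ansatz into (iii) to obtain a closed overdetermined PDE system on the KT factor. The non-K\"ahler character of $g_{KT}$ (equivalently, $[e_1,e_2]\neq 0$, so $\nabla J_{KT}\neq 0$) is decisive: differentiating the equations in (iii) and commuting derivatives via the nontrivial structure constants should produce enough independent scalar relations to contradict the existence of any nonzero $\psi$. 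This final step is the crux of the proof, since in the K\"ahler case $r_g^-=0$ and nontrivial Killing-type potentials may exist; the specific non-$J$-invariance of $\mathrm{Ric}(g_{KT})$ is precisely what rules this out here.
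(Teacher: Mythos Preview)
Your overall architecture---reduce to $B_\epsilon$ by unique continuation for a $\Delta_g^2$-type operator, then analyze the system $(\nabla d\psi)^- = r_g^-\psi$ locally---matches the paper and is sound. The Ricci values you quote are correct.

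However, there is a genuine gap: you explicitly identify the final step as ``the crux'' and then do not carry it out, writing only that differentiating and commuting ``should produce enough independent scalar relations.'' That step \emph{is} the proof; everything before it is scaffolding. The paper does not leave this to a heuristic: it writes down six explicit scalar equations coming from the $4\times 4$ KT block (equations involving $\psi_{xx}-\psi_{tt}$, $\psi_{xt}$, etc.), and then performs a concrete elimination. From $\psi_{xt}=0$ one writes $\psi=a(x,y,z,w)+b(y,z,t,w)$; substituting into the $(1,1)$-equation yields a linear ODE in $t$ for $b$, and feeding the resulting exponential ansatz back into the remaining equations successively forces the coefficients to zero, giving $\psi_t\equiv 0$. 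A second short elimination using the $(1,2)$-, $(1,3)$-, and $(2,3)$-equations then gives $\psi_x\equiv 0$, and the $(1,1)$-equation finishes. None of this is automatic; it requires actually writing and manipulating the equations.

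Two further remarks. First, your proposed route through the Euclidean and mixed blocks is unnecessary: the paper deliberately uses \emph{only} the six equations with $i,j\in\{1,2,3,4\}$, treating the Euclidean coordinates $w$ as inert parameters, and this already suffices. Second, your claim that (i) yields ``pluriharmonic in the Euclidean variables'' is mis-stated: vanishing of the $J$-anti-invariant Hessian in a flat K\"ahler block means the $(2,0)+(0,2)$ part of $\nabla d\psi$ vanishes (i.e.\ the complex Hessian is of type $(1,1)$), which is the opposite of pluriharmonicity. This does not by itself derail your plan, but it shows the Euclidean block gives weaker information than you suggest---another reason the paper's KT-only argument is the cleaner path.
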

\begin{proof}
We look into the computations in \cite[Section 4]{kim}, where any global solution on Kodaira-Thurston compact manifold was shown to be zero,  whereas we shall now improve to show any {\it local} solution on it is zero.

Let's first compute the curvature of $g$ on $B_\epsilon$. Since it is the product of the Kodaira-Thurston metric and the Euclidean metric, we will only list components for $i=1,\cdots, 4$. (Recall the frame $e_1, e_2, e_3, e_4$ in Section 5.)
From the formula
$$2\langle\nabla_XY,Z\rangle=X\langle Y,Z\rangle+Y\langle X,Z\rangle-Z\langle X,Y\rangle-\langle X,[Y,Z]\rangle-\langle Y,[X,Z]\rangle+\langle Z,[X,Y]\rangle,$$ one can compute
$$\nabla_{e_1}e_2=-\nabla_{e_2}e_1=\frac{1}{2}e_3,\ \ \ \nabla_{e_1}e_3=\nabla_{e_3}e_1=-\frac{1}{2}e_2,$$
$$\nabla_{e_2}e_3=\nabla_{e_3}e_2=\frac{1}{2}e_1,\ \ \ \nabla_{e_i}e_i=\nabla_{e_4}e_i=\nabla_{e_i}e_4=0.$$
Thus letting $$\nabla e_i=\sum_j\omega_{ij}e_j,$$ we have
$$\omega_{ij}=\frac{1}{2}
\left(
  \begin{array}{cccc}
    0 & dz-xdy & dy & 0 \\
    -dz+xdy & 0 & -dx & 0 \\
    -dy & dx & 0 & 0 \\
    0 & 0 & 0 & 0 \\
  \end{array}
\right),
$$
and the Cartan structure equation gives
\begin{eqnarray*}
\Omega_{ij}&=& d\omega_{ij}+\sum_k\omega_{ik}\wedge\omega_{kj}\\ &=& \frac{1}{4}
\left(
  \begin{array}{cccc}
    0 & -3dx\wedge dy & dx\wedge(dz-xdy) & 0 \\
    3dx\wedge dy & 0 & dy\wedge dz & 0 \\
    (dz-xdy)\wedge dx & -dy\wedge dz & 0 & 0 \\
    0 & 0 & 0 & 0 \\
  \end{array}
\right).
\end{eqnarray*}
Denoting the sectional curvature of the plane spanned by $e_i$ and $e_j$ by $K_{ij}$, we have
$$K_{12}=-\frac{3}{4},\ K_{13}=\frac{1}{4},\ K_{23}=\frac{1}{4},\ K_{i4}=0$$ for any $i$.
Then the Ricci tensor $(r_{ij})$ is given by
$$r_{11}=-\frac{1}{2},\ r_{22}=-\frac{1}{2},\ r_{33}=\frac{1}{2},\ r_{44}=0,\ r_{ij}=0 \ \textrm{for } i\ne j$$ so that
$$r_{11}^-=-\frac{1}{4},\ r_{22}^-=-\frac{1}{2},\ r_{33}^-=\frac{1}{2},\ r_{44}^-=\frac{1}{4},\ r_{ij}^-=0 \ \textrm{for } i\ne j.$$

Denoting  $(\nabla d\psi)(e_i,e_j)=e_i(e_j\psi)-(\nabla_{e_i}e_j)\psi$ for $\psi\in C^\infty(B_\epsilon)$ by $\nabla d\psi_{ij}$,  one can easily get
$$\nabla d\psi_{11}=\psi_{xx},\ \nabla d\psi_{22}=\psi_{yy}+2x\psi_{yz}+x^2\psi_{zz},\ \nabla d\psi_{33}=\psi_{zz},\ \nabla d\psi_{44}=\psi_{tt},$$
$$\nabla d\psi_{12}=\psi_{xy}+x\psi_{yz}+\frac{1}{2}\psi_{z},\ \ \ \ \ \nabla d\psi_{13}=\psi_{xz}+\frac{1}{2}\psi_{y}+\frac{x}{2}\psi_{z},\ \ \ \ \  \nabla d\psi_{14}=\psi_{xt},$$
$$\nabla d\psi_{23}=\psi_{yz}+x\psi_{zz}-\frac{1}{2}\psi_{x}, \ \ \ \ \ \ \ \ \nabla d\psi_{24}=\psi_{yt}+x\psi_{zt}, \ \ \ \ \ \ \ \ \ \nabla d\psi_{34}=\psi_{zt}.$$
We list only $\nabla d\psi_{ij}$ for $i,j=1,\cdots,4$, because that's enough for our purpose.

\smallskip
Also denoting $(\nabla d\psi)^-(e_i,e_j)=\frac{1}{2}(\nabla d\psi(e_i,e_j)-\nabla d\psi({J}e_i, {J}e_j))$ simply by $\nabla^-d\psi_{ij}$, one can  get
$$2\nabla^-d\psi_{11}=\psi_{xx}-\psi_{tt}, \ \ \ \ \ \ \ \  \ 2\nabla^-d\psi_{22}=\psi_{yy}+2x\psi_{yz}+(x^2-1)\psi_{zz},$$
$$2\nabla^-d\psi_{12}=\psi_{xy}+x\psi_{xz}+\frac{1}{2}\psi_z+\psi_{zt},\ 2\nabla^-d\psi_{13}=\psi_{xz}+\frac{1}{2}\psi_y+\frac{1}{2}x\psi_z-\psi_{yt}-x\psi_{zt},$$
$$2\nabla^-d\psi_{23}=2(\psi_{yz}+x\psi_{zz}-\frac{1}{2}\psi_x), \ \ \ \  \ \ \ \ \ \ \ \  2\nabla^-d\psi_{14}=2\psi_{xt}.$$

Now suppose $\psi\in \ker (D_gS_\omega)^*$, i.e.
\begin{equation} \label{dsst}
 \nabla^-d\psi-\psi r^-=0.
\end{equation}
 Then from the above, we get the following 6 equations of $\psi$ on $B_\epsilon$ :
\begin{eqnarray}\label{equation1}
\psi_{xx}-\psi_{tt}=-\frac{1}{2}\psi,
\end{eqnarray}
\begin{eqnarray}\label{equation2}
\psi_{yy}+2x\psi_{yz}+(x^2-1)\psi_{zz}=-\psi,
\end{eqnarray}
\begin{eqnarray}\label{equation3}
\psi_{xy}+x\psi_{xz}+\frac{1}{2}\psi_z+\psi_{zt}=0,
\end{eqnarray}
\begin{eqnarray}\label{equation4}
\psi_{xz}+\frac{1}{2}\psi_y+\frac{1}{2}x\psi_z-\psi_{yt}-x\psi_{zt}=0,
\end{eqnarray}
\begin{eqnarray}\label{equation5}
\psi_{yz}+x\psi_{zz}-\frac{1}{2}\psi_x=0,
\end{eqnarray}
\begin{eqnarray}\label{equation6}
\psi_{xt}=0.
\end{eqnarray}

In order to deduce $\psi=0$ on $B_\epsilon$ out of these 6 equations, let's write the local coordinate $(x,y,z,t,x_5,\cdots,x_{2n})$ on $B_\epsilon$ as $(x,y,z,t)\times w$ so that $w=(x_5,\cdots,x_{2n})$. We will first show $\psi_t=0$ and then $\psi_x=0$, which together imply $\psi=0$ by (\ref{equation1}).

From (\ref{equation6}), $\psi(x,y,z,t,w)$ can be written as $a(x,y,z,w)+b(y,z,t,w)$. Substituting it into (\ref{equation1}) gives $$a_{xx}-b_{tt}=-\frac{1}{2}(a+b).$$ Then the LHS of $a_{xx}+\frac{1}{2}a=b_{tt}-\frac{1}{2}b$ is a function of $x,y,z,w$, whereas its RHS is a function of $y,z,t,w$. Thus both sides are functions of y,z, and w only. Differentiating the RHS with respect to $t$ gives $$b_{ttt}-\frac{1}{2}b_t=0.$$ Solving this ODE, we get $$b_t=b_1(y,z,w)e^{\frac{t}{\sqrt{2}}}+b_2(y,z,w)e^{-\frac{t}{\sqrt{2}}}$$ so that
$$b=\sqrt{2}b_1(y,z,w)e^{\frac{t}{\sqrt{2}}}-\sqrt{2}b_2(y,z,w)e^{-\frac{t}{\sqrt{2}}}+b_3(y,z,w).$$
Now plugging $a+b$ into (\ref{equation3}), and picking up only $t$ terms, we get
$$\frac{1}{2}(\sqrt{2}\frac{\partial b_1}{\partial z}e^{\frac{t}{\sqrt{2}}}-\sqrt{2}\frac{\partial b_2}{\partial z}e^{-\frac{t}{\sqrt{2}}})+
\frac{\partial b_1}{\partial z}e^{\frac{t}{\sqrt{2}}}+\frac{\partial b_2}{\partial z}e^{-\frac{t}{\sqrt{2}}}=0$$
so that $\frac{\partial b_1}{\partial z}=\frac{\partial b_2}{\partial z}=0$, and hence we can conclude that $b_1$ and $b_2$ are functions of $y$ and $w$ only.

Plugging new $a+b$ into (\ref{equation4}), and picking up only $t$ terms, we get
$$\frac{1}{2}(\sqrt{2}\frac{\partial b_1}{\partial y}e^{\frac{t}{\sqrt{2}}}-\sqrt{2}\frac{\partial b_2}{\partial y}e^{-\frac{t}{\sqrt{2}}})-(
\frac{\partial b_1}{\partial y}e^{\frac{t}{\sqrt{2}}}+\frac{\partial b_2}{\partial y}e^{-\frac{t}{\sqrt{2}}})=0,$$
which implies that $\frac{\partial b_1}{\partial y}=\frac{\partial b_2}{\partial y}=0$, and hence $b_1$ and $b_2$ are functions of $w$ only.

Again plugging this new $a+b$ into (\ref{equation2}), and picking up only $t$ terms, we get
$$0=-(\sqrt{2}b_1e^{\frac{t}{\sqrt{2}}}-\sqrt{2}b_2e^{-\frac{t}{\sqrt{2}}}),$$ which finally implies that $b_1=b_2=0$ and hence $\psi_t=0$.

Taking $\frac{\partial}{\partial x}$ to (\ref{equation3}) produces $$\psi_{xxy}+x\psi_{xxz}+\frac{3}{2}\psi_{xz}=0.$$
Applying $\psi_{xx}=-\frac{1}{2}\psi$ from (\ref{equation1}), this becomes $$-\frac{1}{2}\psi_y-\frac{1}{2}x\psi_z+\frac{3}{2}\psi_{xz}=0.$$
Comparing it with $\psi_{xz}+\frac{1}{2}\psi_y+\frac{1}{2}x\psi_z=0$ from (\ref{equation4}) gives
$$\psi_y+x\psi_z=0$$ so that $$\psi_{yz}+x\psi_{zz}=0.$$
Combing it with (\ref{equation5}), we get desired $\psi_x=0$.

Finally we have $\psi=0$ on $B_\epsilon$. $\psi$ is a solution of a linear elliptic equation $(D_gS_\omega)\circ (D_gS_\omega)^* \psi=0$ whose principal part is bi-Laplacian from Lemma \ref{kim-idea}. So we get  $\psi=0$ on $M$ by the unique continuation principle for a bi-Laplace type operator\footnote{For example, one can apply Protter's theorem \cite{protter} which states that if a real-valued function $u$ defined in a domain $D\subset \Bbb R^m$ containing $0$ satisfies that $|\Delta^nu|\leq f(x,u,Du,\cdots,D^ku)$ for Lipschitzian $f$ and $k\leq [\frac{3n}{2}]$, and $e^{2r^{-\beta}}u\rightarrow 0$ as $r:=\sqrt{x_1^2+\cdots+x_m^2}\rightarrow 0$ for any constant $\beta>0$, then $u$ vanishes identically in $D$.}, finishing the proof.
\end{proof}

\bigskip

\begin{remark}
{\rm  In our argument, a particular metric $g_{KT}$ on Kodaira-Thurston manifold is used, as it guarantees the surjectivity of the derivative of scalar curvature map at the constructed metric.
 One may guess, reasonably, that a generic almost \k metric satisfies this surjectivity, as in the Riemannian case \cite[4.37]{Be}. However, the equation  (\ref{dsst}) is not readily understood.
For this matter, the article \cite[Theorem 7.4]{BCS} on local deformation of scalar curvature is interesting; generic (local) surjectivity of scalar curvature was shown by some method. In that argument a crucial part was the existence of one real analytic metric {\it without local KIDs}, i.e.  with (locally) surjective derivative of scalar curvature functional.

Here we did not try to prove such generic surjectivity. Rather we bypassed it; we found one almost \k metric without local KIDs, and then we imbedded it onto any symplectic manifold, obtaining a no-global-KID metric.
 We hope to address this generic surjectivity issue in near future.}
\end{remark}

\begin{remark}
{\rm For an almost \k version of Kazdan-Warner theory,   one may try the Hermitian scalar curvature $\frac{1}{2} (s + s^{*})$ \cite{AD} rather than the usual scalar curvature, where $s^*$ is the {\it star scalar} curvature.
 However, although the Hermitian scalar curvature is natural in almost \k geometry,  Kazdan-Warner theory goes better with usual one.
 To see this, recall the  Blair's formula \cite{Bl3}  for $g \in \Omega_{\omega}$:
\begin{equation} \label{Blair}
   \int \frac{1}{2} (s_g + s^{*}_g) \frac{\omega^n}{n!} =  4\pi c_1(\omega)
\cdot \frac{[\omega]^{n-1}}{(n-1)!}.
\end{equation}

Since $c_1$ of any symplectic structure on the 4-d torus is zero by Taubes' result \cite{taubes1},
no negative function can be the Hermitian scalar curvature of an almost \k metric on 4-d torus.}
\end{remark}

\end{document}